%%%%%%%%%%%%%%%%%%%%%%%%%%%%%%%%%%%%%%%%%%%%%%%%
%%%%%%%%%%%%%%%%%%%%%%%%%%%%%%%%%%%%%%%%%%%%%%%%%
%%%%%%%%%%%%%%%%%%%%%%%%%%%%%%%%%%%%%%%%%%%%%%%%%
\documentclass[12pt]{amsart}
\usepackage{amssymb,amsthm,amstext,amsmath,amscd,latexsym,amsfonts}
%\usepackage[all]{xy}
%%%%%%%%%%%%%%%%%%%%%%%%%%%%%%%%%%%%%%%%%%%%%%%%%
%%%%%%%%%%%%%%%%%%%%%%%%%%%%%%%%%%%%%%%%%%%%%%%%%
%%%%% Page size %%%%%%%%%%%%%%%%%%%%%%%%%%%%%%%%%
%%%%%%%%%%%%%%%%%%%%%%%%%%%%%%%%%%%%%%%%%%%%%%%%%
%%%%%%%%%%%%%%%%%%%%%%%%%%%%%%%%%%%%%%%%%%%%%%%%%
\textwidth=145mm
\textheight=210mm
\oddsidemargin=10mm
\evensidemargin=10mm
\topmargin=0mm
%%%%%%%%%%%%%%%%%%%%%%%%%%%%%%%%%%%%%%%%%%%%%%%%%%%%%%%%%%%
%%%%%%%%%%%%%%%%%% avoidance %%%%%%%%%%%%%%%%%%%%%%%%%%%%%%
%%%%%%%%%%%%%%%%%%%%%%%%%%%%%%%%%%%%%%%%%%%%%%%%%%%%%%%%%%%
%%%%%%%%%%%%%%%%%%%%%%%%%%%%%%%%%%%%%%%%%%%%%%%%%%%%%%%%
%%%%%%%%%%%%%%%%%%%%%%%%%%%%%%%%%%%%%%%%%%%%%%%%%%%%%%%%
%%%%%%%%%%%%%%%%%%%%% Special notation %%%%%%%%%%%%%%%%%
%%%%%%%%%%%%%%%%%%%%%%%%%%%%%%%%%%%%%%%%%%%%%%%%%%%%%%%%
%%%%%%%%%%%%%%%%%%%%%%%%%%%%%%%%%%%%%%%%%%%%%%%%%%%%%%%%

\def\FMN{{\mathcal F}_{M, N}} 
%%%%%%%%%%%%%%%%%%%%%%%%%%%%%%%%%%%%%%%%%%%%%%%%%%%%%%%%
%%%%%%%%%%%%%%%%%%%%%%%%%%%%%%%%%%%%%%%%%%%%%%%%%%%%%%%%
%%%%%%%%%%%%%%%%%%%%%%%%%%%%%%%%%%%%%%%%%%%%%%%%%%%%%%%%
\def\grmod{\mathrm{mod}^{\Z}}

\def\grmodR{\mathrm{mod}^{\Z}(R)}
\def\grCMR{\mathrm{CM}^{\Z}(R)}
\def\grCMPSFR{\mathrm{CM}_{0}^{\Z}(R)}
\def\grHom{{}^{\ast}\mathrm{Hom}}
\def\grHomR{{}^{\ast}\mathrm{Hom}_R}

\def\grExt{{}^{\ast}\mathrm{Ext}}

%%%%%%%%%%%%%%%%%%%%%%%%%%%%%%%%%%%%%%%%%%%%%%%%%%%%%%%%
%%%%%%%%%%%%%%%%%%%%%%%%%%%%%%%%%%%%%%%%%%%%%%%%%%%%%%%%
%%%%%%%%%%%%%%%%%% stable notation %%%%%%%%%%%%%%%%%%%%%
%%%%%%%%%%%%%%%%%%%%%%%%%%%%%%%%%%%%%%%%%%%%%%%%%%%%%%%%
%%%%%%%%%%%%%%%%%%%%%%%%%%%%%%%%%%%%%%%%%%%%%%%%%%%%%%%%

\def\ulM{{\underline{M}}}
\def\ulN{{\underline{N}}}
\def\ulZ{{\underline{Z}}}
\def\SgrCM{{\underline{\mathrm{CM}}^{\Z}}}

\def\SgrCMR{{\underline{\mathrm{CM}}^{\Z}}(R)}

%%%%%%%%%%%%%%%%%%%%%%%%%%%%%%%%%%%%%%%%%%%%%%%%%%%%%%%%
%%%%%%%%%%%%%%%%%%%%%%%%%%%%%%%%%%%%%%%%%%%%%%%%%%%%%%%%
%%%%%%%%%%%%%%%%%%% Order notation %%%%%%%%%%%%%%%%%%%%%
%%%%%%%%%%%%%%%%%%%%%%%%%%%%%%%%%%%%%%%%%%%%%%%%%%%%%%%%
%%%%%%%%%%%%%%%%%%%%%%%%%%%%%%%%%%%%%%%%%%%%%%%%%%%%%%%%
\def\homo{\leq _{hom}}
\def\dego{\leq _{deg}}
\def\exto{\leq _{ext}}

%%%%%%%%%%%%%%%%%%%%%%%%%%%%%%%%%%%%%%%%%%
%%%%%%%%%%%%%%%%%%%%%%%%%%%%%%%%%%%%%%%%%%
%%%%%%%%%%%%%%%% General notation %%%%%%%%
%%%%%%%%%%%%%%%%%%%%%%%%%%%%%%%%%%%%%%%%%%
%%%%%%%%%%%%%%%%%%%%%%%%%%%%%%%%%%%%%%%%%%
\def\Hom{\mathrm{Hom}}
\def\HomR{\mathrm{Hom}_R}
\def\End{\mathrm{End}}
\def\Ext{\mathrm{Ext}}

\def\Im{\mathrm{Im}}

%%%%%%%%%%%%%%%%%%%%%%%%%%%%%%%%%%%%%%%%%%%%%%%%%%%%%%%%
%%%%%%%%%%%%%%%%%%%%%%%%%%%%%%%%%%%%%%%%%%%%%%%%%%%%%%%%
%%%%%%%%%%%%%%%%%%%% Invariant of rings %%%%%%%%%%%%%%%%
%%%%%%%%%%%%%%%%%%%%%%%%%%%%%%%%%%%%%%%%%%%%%%%%%%%%%%%%
%%%%%%%%%%%%%%%%%%%%%%%%%%%%%%%%%%%%%%%%%%%%%%%%%%%%%%%%

%%%%%%%%%%%%%%%%%%%%%%%%%%%%%%%%%%%%%%%%%%%%%%%%%%%%%%%%%%
%%%%%%%%%%%%%%%%%%%%%%%%%%%%%%%%%%%%%%%%%%%%%%%%%%%%%%%%%%
%%%%%%%%%%%% frak and bb %%%%%%%%%%%%%%%%%%%%%%%%%%%%%%%%%
%%%%%%%%%%%%%%%%%%%%%%%%%%%%%%%%%%%%%%%%%%%%%%%%%%%%%%%%%%
%%%%%%%%%%%%%%%%%%%%%%%%%%%%%%%%%%%%%%%%%%%%%%%%%%%%%%%%%%
\def\N{\mathbb N}
\def\Z{\mathbb Z}
\def\m{\mathfrak m}
\def\p{\mathfrak p}

\def\L{\mathcal L} 
\def\R{\mathcal R} 

\def\FF{\mathcal F} 
 
%%%%%%%%%%%%%%%%%%%%%%%%%%%%%%%%%%%%%%%%%%%%%%%%%%%%%%%
%%%%%%%%%%%%%%%%%%%%%%%%%%%%%%%%%%%%%%%%%%%%%%%%%%%%%%%
%%%%%%%%%%%%%%%%% theoremstyle %%%%%%%%%%%%%%%%%%%%%%%%
%%%%%%%%%%%%%%%%%%%%%%%%%%%%%%%%%%%%%%%%%%%%%%%%%%%%%%%
%%%%%%%%%%%%%%%%%%%%%%%%%%%%%%%%%%%%%%%%%%%%%%%%%%%%%%%
\newtheorem{theorem}{Theorem}[section]
\newtheorem{lemma}[theorem]{Lemma}
\newtheorem{corollary}[theorem]{Corollary}
\newtheorem{proposition}[theorem]{Proposition}
\theoremstyle{definition}
\newtheorem{definition}[theorem]{Definition}

\theoremstyle{remark}
\newtheorem{remark}[theorem]{Remark}
\numberwithin{equation}{section}
%    Absolute value notation
%\newcommand{\abs}[1]{\lvert#1\rvert}
%    Blank box placeholder for figures (to avoid requiring any
%    particular graphics capabilities for printing this document).
%\newcommand{\blankbox}[2]{%
%  \parbox{\columnwidth}{\centering
%    Set fboxsep to 0 so that the actual size of the box will match the
%    given measurements more closely.
%    \setlength{\fboxsep}{0pt}%
%    \fbox{\raisebox{0pt}[#2]{\hspace{#1}}}%
%  }%
%}
\begin{document}
%%%%%%%%%%%%%%%%%%%%%%%%%%%%%%%%%%%%%%%%%%%%%%%%%%%%%%%%
%%%%%%%%%%%%%%%%%%%%%%%%%%%%%%%%%%%%%%%%%%%%%%%%%%%%%%%%
%%%%%%%%%%%%%%%%%%%%%%%%%%%%%%%%%%%%%%%%%%%%%%%%%%%%%%%%
%%%%%%%%%%%%%%%%%%%%%%%%%%%%%%%%%%%%%%%%%%%%%%%%%%%%%%%%
%%%%%%%%%%%%% Title %%%%%%%%%%%%%%%%%%%%%%%%%%%%%%%%%%%%
%%%%%%%%%%%%%%%%%%%%%%%%%%%%%%%%%%%%%%%%%%%%%%%%%%%%%%%%
%%%%%%%%%%%%%%%%%%%%%%%%%%%%%%%%%%%%%%%%%%%%%%%%%%%%%%%%
%%%%%%%%%%%%%%%%%%%%%%%%%%%%%%%%%%%%%%%%%%%%%%%%%%%%%%%%
%%%%%%%%%%%%%%%%%%%%%%%%%%%%%%%%%%%%%%%%%%%%%%%%%%%%%%%%
\title[Degenerations of graded Cohen-Macaulay modules]{\textbf{Degenerations of graded Cohen-Macaulay modules}}
%%%%%%%%%%%%%%%%%%%%%%%%%%%%%%%%%%%%%%%%%
%%%%%%%%%%%%%%%%%%%%%%%%%%%%%%%%%%%%%%%%%
%%%%%%%% Informations of Authors %%%%%%%%
%%%%%%%%%%%%%%%%%%%%%%%%%%%%%%%%%%%%%%%%%
%%%%%%%%%%%%%%%%%%%%%%%%%%%%%%%%%%%%%%%%%
\author{Naoya Hiramatsu} 
\address{Department of general education, Kure National College of Technology, 2-2-11, Agaminami, Kure Hiroshima, 737-8506 Japan}
\email{hiramatsu@kure-nct.ac.jp}
%%%%%%%%%%%%%%%%%%%%%%%%%%%%%%%%%%%%%%%%%
%%%%%%%%%%%%%%%%%%%%%%%%%%%%%%%%%%%%%%%%%
%%%%%%%%%%%% General info %%%%%%%%%%%%%%%
%%%%%%%%%%%%%%%%%%%%%%%%%%%%%%%%%%%%%%%%%
%%%%%%%%%%%%%%%%%%%%%%%%%%%%%%%%%%%%%%%%%
\subjclass[2000]{Primary 16W50 ; Secondary  13D10}
\date{\today}
\keywords{degeneration, graded Cohen-Macaulay module, finite representation type}
%%%%%%%%%%%%%%%%%%%%%%%%%%%%%%%%%%%%%%%%%
%%%%%%%%%%%%%%%%%%%%%%%%%%%%%%%%%%%%%%%%%
%%%%%%%%%%%%%%%%%%%%%%%%%%%%%%%%%%%%%%%%%
%%%%%%%%%%%%% Abstract %%%%%%%%%%%%%%%%%%
%%%%%%%%%%%%%%%%%%%%%%%%%%%%%%%%%%%%%%%%%
%%%%%%%%%%%%%%%%%%%%%%%%%%%%%%%%%%%%%%%%%
%%%%%%%%%%%%%%%%%%%%%%%%%%%%%%%%%%%%%%%%%
\begin{abstract}

We introduce a notion of degenerations of graded modules. 
In relation to it, we also introduce several partial orders as graded analogies of the hom order, the degeneration order and the extension order. 
We prove that these orders are identical on the graded Cohen-Macaulay modules if a graded ring is of graded finite representation type and representation directed. 
\end{abstract}
\maketitle
%%%%%%%%%%%%%%%%%%%%%%%%%%%%%%%%%%%%%%%%%%
%%%%%%%%%%%%%%%%%%%%%%%%%%%%%%%%%%%%%%%%%%
%%%%%%%%%%%%%%%%%%%%%%%%%%%%%%%%%%%%%%%%%%
%%%%%%% Introduction %%%%%%%%%%%%%%%%%%%%%
%%%%%%%%%%%%%%%%%%%%%%%%%%%%%%%%%%%%%%%%%%
%%%%%%%%%%%%%%%%%%%%%%%%%%%%%%%%%%%%%%%%%%
%%%%%%%%%%%%%%%%%%%%%%%%%%%%%%%%%%%%%%%%%%
%%%%%%%%%%%%%%%%%%%%%%%%%%%%%%%%%%%%%%%%%%
\section{Introduction}

The notion of degenerations of modules appears in geometric methods of representation theory of finite dimensional algebras. 
In \cite{Y04} Yoshino gives a scheme-theoretical definition of degenerations, so that it can be considered for modules over a Noetherian algebra which is not necessary finite dimensional. 
Now, a theory of degenerations is considered for derived module categories \cite{JSZ05} or stable module categories \cite{Y11}.  
The degeneration problem of modules has been studied by many authors \cite{B96, R86, Y02, Y04, Z99, Z00}. 
For the study, several order relations for modules, such as the hom order, the degeneration order and the extension order, were introduced, and the connection among them has been studied. 
In the previous paper \cite{HY10}, the author give the complete description of degenerations over a ring of even-dimensional simple hypersurface singularity of type ($A_n$). 

In the present paper we consider degenerations of  graded Cohen-Macaulay modules over a graded Gorenstein ring with a graded isolated singularity. 
First we consider an order relation on a category of graded Cohen-Macaulay modules which is called the hom order (Definition \ref{dfn of hom order}). 
We shall show that it is actually a partial order if a graded ring is Gorenstein with a graded isolated singularity.  
 
In section \ref{Degenerations of graded Cohen-Macaulay modules} we propose a definition of degenerations for graded modules and we state several properties on it. 
We show that if the graded ring is of graded finite representation type and representation directed, then the hom order, the degeneration order and the extension order are identical on the graded Cohen-Macaulay modules. 
We also consider stable analogue of degenerations of graded Cohen-Macaulay modules in section \ref{stable degeneration}.  

%%%%%%%%%%%%%%%%%%%%%%%%%%%%%%%%%%%%%%%%%%%%%%%%%%%%%%%%%
%%%%%%%%%%%%%%%%%%%%%%%%%%%%%%%%%%%%%%%%%%%%%%%%%%%%%%%%%
%%%%%%%%%%%%%%%%%%%%%%%%%%%%%%%%%%%%%%%%%%%%%%%%%%%%%%%%%
%%%%%%%%%%%%%%%%%%%%%%% Section 2 %%%%%%%%%%%%%%%%%%%%%%%
%%%%%%%%%%%%%%%%%%%%%%%%%%%%%%%%%%%%%%%%%%%%%%%%%%%%%%%%%
%%%%%%%%%%%%%%%%%%%%%%%%%%%%%%%%%%%%%%%%%%%%%%%%%%%%%%%%%
%%%%%%%%%%%%%%%%%%%%%%%%%%%%%%%%%%%%%%%%%%%%%%%%%%%%%%%%%
%%%%%%%%%%%%%%%%%%%%%%%%%%%%%%%%%%%%%%%%%%%%%%%%%%%%%%%%%
\section{Hom order on graded modules}\label{hom order on graded modules}

Throughout the paper $R = \oplus _{i=0}^{\infty} R_i$ be a commutative Noetherian $\N$-graded Cohen-Macaulay ring with $R_0 = k$ a field of characteristic zero.   
A graded ring is said to be ${}^\ast$local if the set of graded proper ideals has a unique maximal element. 
Thus $R$ is ${}^\ast$local since $\m = \oplus _{i > 0} R_i$ is  a unique maximal ideal of $R$. 
We denote by $\grmodR$ the category of finitely generated $\Z$-graded modules whose morphisms are homogenous morphisms that preserve degrees. 
For $i \in \Z$, $M(i) \in \grmodR$ is defined by $M(i)_n = M _{n+i}$. 
Then $\HomR (M, N(i))$ consisting of homogenous morphisms of degree $i$, and we set 
$$
\grHomR (M, N) = \oplus _{i \in \Z} \HomR (M, N(i)).
$$
For a graded prime ideal $\p$ of $R$, we denote by $R_{(\p )}$ a homogenous localization of $R$ by $\p$. 
For $M \in \grmodR$, take a graded free resolution 
$$
\cdots \rightarrow F_l\xrightarrow{d_l}  F_{l-1}  \rightarrow \cdots \rightarrow F_1 \xrightarrow{d_1} F_0 \rightarrow M \rightarrow 0.
$$
We define an $l$th syzygy module $\Omega ^{l} M$ of M by $\Im d_l$. 
We say that a graded $R$-module $M$ is said to be a graded Cohen-Macaulay $R$-module if 
$$
\grExt_{R}^{i} (R/\m, M) = 0 \quad \text{for any } i < d = \dim R.
$$
In particular, this condition is equivalent to 
$$
\grExt_{R}^{i} (M, \omega _R) = 0 \quad \text{for any } i > 0, 
$$
where $\omega _R$ is a ${}^\ast$canonical module of $R$. 
We denote by $\grCMR$ the full subcategory of $\grmodR$ consisting of graded Cohen-Macaulay $R$-modules. 
By our assumption on $R$, $\grmodR$ and $\grCMR$ are Krull-Schmidt, namely each object can be decomposed into indecomposable objects up to isomorphism uniquely.
For $M \in \grmodR$ we denote by $h(M)$ a sequence $(\dim _k M_n) _{n \in \Z}$ of non-negative integers. 
By the definition, it is easy to see that $h(M) = h(N)$ if and only if they have the same Hilbert series. 
Moreover, we also have that $h(M) = h(N)$ if and only if $h(M^{\ast} ) = h(N^{\ast} )$ where $(-)^{\ast} = \grHomR (-, \omega _R)$. 
See \cite[Theorem 4.4.5.]{BH}.

\begin{remark}
If $M$ and $N$ give the same class in the Grothendieck group, {\it i.e.} $[M] = [N]$ as an element of $K_{0} (\grmodR)$ then $h(M) = h(N)$. 
However the converse does not hold in general. 
Let $R = k[x, y]/(x^2-y^2)$ with $\deg x = \deg y = 1$ and set $M = R/(x + y)$ and $N = R /(x-y)$. 
Then $h(M) = h(N)$ and $[M] \not= [N]$ in $K_{0} (\grmodR)$. 
In fact, let $\p$ be an ideal $(x + y)R$. 
Then $\mathrm{rank} _{R_{(\p)}} M_{(\p)} = 1$ and $\mathrm{rank} _{R_{(\p )}} N_{(\p )} = 0$. 
Note that $[M] = [N]$ on $K_{0} (\grmod (R_{(\p)}))$ yields that $[M_{(\p )}] = [N_{(\p )}]$ on $K_{0} (\grmod (R_{(\p)}))$. 
Thus $[M] = [N]$ can never happen.  
\end{remark}

Our motivation of the paper is to investigate the graded degenerations of graded Cohen-Macaulay modules in terms of some order relations. 
For the reason we consider the following relation on $\grCMR$ that is known as the hom order.

\begin{definition}\label{dfn of hom order}
For $M$, $N \in \grCMR$ we define $M \homo N$ if $[M, X] \leq [N, X]$ for each $X \in \grCMR$. 
Here $[M, X] $ is an abbreviation of $\dim_k \HomR (M, X)$.  
\end{definition}

\begin{remark}\label{rmk of hom order}
For $M$, $N \in \grmodR$, $[M, N]$ is finite, and thus we can consider the above relation. 
As a consequence in \cite{B89}, if $R$ is of dimension $0$, $1$ or $2$, $\homo$ is a partial order on $\grCMR$ since $\grCMR$ is closed under kernels in such cases. 
\end{remark}

%%%%%%%%%%%%%%%%%%%%%%%
%%%%%%%%%%%%%%%%%%%%%%%
%%%%%%%%%%%%%%%%%%%%%%%
\begin{lemma}\label{pd finite}
Let $R$ be a graded Gorenstein ring an let $M$ and $N$ be indecomposable graded Cohen-Macaulay $R$-modules.
Suppose that $h(M) = h(N)$. 
For $Y \in \grmodR$ which is of finite projective dimension, we have $[M, Y] = [N, Y]$. 
\end{lemma}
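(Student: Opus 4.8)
The plan is to reduce the computation of $[M, Y]$ to the case where the target is a shifted free module, and then to recognise that for free targets the dimension of the Hom is controlled entirely by the Hilbert function of the dual module. First I would fix a finite graded free resolution
$$
0 \to F_n \xrightarrow{d_n} \cdots \xrightarrow{d_1} F_0 \xrightarrow{d_0} Y \to 0,
$$
which exists because $Y$ has finite projective dimension, and write each $F_i = \oplus_j R(b_{ij})$.

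The key input is that $M$ is maximal Cohen-Macaulay over the Gorenstein ring $R$. Since $R$ is graded Gorenstein, its ${}^\ast$canonical module satisfies $\omega _R \cong R(a)$ for some $a \in \Z$, so the defining vanishing $\grExt_R^i(M, \omega _R) = 0$ for $i > 0$ becomes $\grExt_R^i(M, R) = 0$ for $i > 0$, and hence $\grExt_R^i(M, F) = 0$ for every graded free module $F$ and every $i > 0$. Breaking the resolution into short exact sequences $0 \to K_i \to F_i \to K_{i-1} \to 0$ (with $K_{-1} = Y$, each $K_i$ of finite projective dimension) and dimension-shifting through the associated long exact sequences of $\grExt_R(M, -)$, I obtain $\grExt_R^i(M, K_j) = 0$ for all $i > 0$ and all $j$, in particular $\grExt_R^i(M, Y) = 0$ for $i > 0$. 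Consequently each of those short exact sequences stays exact after applying $\HomR(M, -)$, and splicing yields an exact complex
$$
0 \to \HomR(M, F_n) \to \cdots \to \HomR(M, F_0) \to \HomR(M, Y) \to 0.
$$
Comparing $k$-dimensions gives $[M, Y] = \sum_{i=0}^n (-1)^i [M, F_i]$, and the same formula holds with $N$ in place of $M$.

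It remains to see that each $[M, F_i]$ depends only on $h(M)$. As $\HomR(M, -)$ commutes with finite direct sums, it suffices to understand $[M, R(b)] = \dim _k \HomR(M, R(b))$. By definition $\HomR(M, R(b)) = (\grHomR(M, R))_b$, while $\omega _R \cong R(a)$ gives $\grHomR(M, R) \cong M^{\ast}(-a)$, so that $[M, R(b)] = \dim _k (M^{\ast})_{b-a}$ is simply an entry of the sequence $h(M^{\ast})$. Now $h(M) = h(N)$ forces $h(M^{\ast}) = h(N^{\ast})$ by the property of the dual recorded before the lemma, whence $[M, R(b)] = [N, R(b)]$ for every $b$, and therefore $[M, F_i] = [N, F_i]$ for each $i$. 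Substituting into the alternating sum yields $[M, Y] = [N, Y]$.

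I expect the main obstacle to be the vanishing $\grExt_R^i(M, Y) = 0$ for $i > 0$: one must leverage the maximal Cohen-Macaulay property together with the Gorenstein identification $\omega _R \cong R(a)$ to power the dimension-shifting argument, since it is precisely this vanishing that upgrades the additivity of Euler characteristics into the exactness needed to express $[M, Y]$ as an honest alternating sum of the $[M, F_i]$. I note in passing that indecomposability of $M$ and $N$ is not actually used in this argument.
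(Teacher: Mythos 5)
Your proposal is correct and follows essentially the same route as the paper's proof: both use the vanishing $\grExt_R^i(M,Y)=0$ for $i>0$ (coming from the maximal Cohen-Macaulay property over the Gorenstein ring) to turn the finite free resolution of $Y$ into an exact $\HomR(M,-)$ complex, reduce $[M,Y]$ to the alternating sum $\sum_i(-1)^i[M,F_i]$, and then use $\omega_R\cong R(a)$ together with $h(M)=h(N)\Rightarrow h(M^{\ast})=h(N^{\ast})$ to see that $[M,F]=[N,F]$ for every graded free module $F$. You simply spell out the dimension-shifting and the identification $[M,R(b)]=\dim_k(M^{\ast})_{b-a}$ in more detail than the paper does, and your observation that indecomposability is not needed is also accurate.
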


\begin{proof}
For a graded $R$-module $Y$ which is of finite projective dimension, $\Omega ^{i} Y$ is also of finite projective dimension.  
Since $R$ is Gorenstein, we have $\Ext ^{1}_R (M, \Omega ^{i} Y) = 0$ for all graded Cohen-Macaulay $R$-modules $M$. 
Thus, take a graded free resolution of $Y$ and apply $\Hom _R (M, -)$ to the resolution, we get an exact sequence
$$
0 \to \Hom _R (M , F_l  ) \to \Hom _R (M , F_{l-1} ) \to \cdots \to \Hom _R (M , F_0 ) \to \Hom _R (M , Y ) \to 0.
$$
Hence
$$
[M, Y] = \sum_{i = 0}^{l} (-1)^i [M, F_i].
$$ 
We also have 
$$
[N, Y] = \sum_{i = 0}^{l} (-1)^i [N, F_i].
$$
Since $R$ is Gorenstein, $\omega _R = R (l)$ for some $l \in \Z$. 
Then we have an equality $[M, F] = [N, F]$ for each free module.  
Therefore we have
$$
[M, Y] = \sum_{i = 0}^{l} (-1)^i [M, F_i] = \sum_{i = 0}^{l} (-1)^i [N, F_i] = [N, Y].
$$  
\end{proof}

%%%%%%%%%%%%%%%%%%%%%%%
%%%%%%%%%%%%%%%%%%%%%%%
%%%%%%%%%%%%%%%%%%%%%%%
In the paper we use the theory of Auslander-Reiten (abbr. AR) sequences of graded Cohen-Macaulay modules. 
For the detail, we recommend the reader to look at \cite{IT10, AR87, AR88} and \cite[Chapter 15.]{Y}.
We denote by $\grCMPSFR$ the full subcategory of $\grCMR$ consisting of $M \in \grCMR$ such that $M_{(\p )}$ is $R_{(\p )}$-free for any graded prime ideal $\p \not= \m$.

\begin{theorem}\cite{AR87, AR88, Y, IT10}\label{AR-sequence}
Let $(R, \m)$ be a Noetherian $\Z$-graded Gorenstein ${}^{\ast}$local ring. 
Then $\grCMPSFR$ admits AR sequences. 
\end{theorem}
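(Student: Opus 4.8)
The plan is to reduce the theorem to the following standard assertion: for every indecomposable non-free $M \in \grCMPSFR$ there is an Auslander--Reiten sequence
$$0 \to \tau M \to E \to M \to 0$$
ending in $M$ (and, dually, one starting in $M$), where $\tau$ is a graded Auslander--Reiten translate and $E, \tau M$ again lie in $\grCMPSFR$. This is the graded transcription of Auslander and Reiten's construction of almost split sequences for maximal Cohen--Macaulay modules; the two essential ingredients are a graded Auslander--Reiten duality and the graded Krull--Schmidt property of $\grCMR$ recorded above. I would follow \cite{AR87, AR88} and \cite[Chapter 3]{Y}, importing the graded refinements of \cite{IT10}.

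First I would construct the graded translate. Starting from a minimal graded free presentation of $M$, I form the Auslander transpose $\mathrm{Tr}\,M$, apply an appropriate power of the syzygy functor $\Omega$, and twist by the grading shift coming from $\omega_R = R(l)$ (available since $R$ is Gorenstein); this produces $\tau M \in \grCMPSFR$, since the transpose and syzygy operations preserve freeness on the punctured spectrum. The technical core is then to establish, over the ${}^\ast$local ring $(R, \m)$, a functorial graded Auslander--Reiten duality
$$\SgrHom_R(M, N) \cong D\,\grExt^1_R(N, \tau M), \qquad N \in \grCMR,$$
where $D$ denotes the graded Matlis dual (agreeing with $\Hom_k(-, k)$ on modules of finite length). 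One obtains this by dualizing the chosen presentation, tracking the grading shifts through $\mathrm{Tr}$ and $\Omega$, and applying graded local duality to convert the resulting Ext term into the Matlis dual of a stable Hom.

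The hypothesis $M \in \grCMPSFR$ now enters decisively. Since $M_{(\p)}$ is $R_{(\p)}$-free for every graded prime $\p \neq \m$, the stable Hom $\SgrHom_R(M, N)$ vanishes after homogeneous localization at any such $\p$ and is therefore of finite length; by the duality above so is $\grExt^1_R(N, \tau M)$. Because $D$ is an involution on finite-length modules, evaluating at $N = M$ and dualizing yields an isomorphism
$$\grExt^1_R(M, \tau M) \cong D\,\SgrHom_R(M, M)$$
of modules over the graded stable endomorphism ring $\SgrHom_R(M, M)$. As $M$ is indecomposable and $\grCMR$ is Krull--Schmidt, this ring is a finite-dimensional local $k$-algebra, so the Matlis dual of its regular representation has a one-dimensional socle. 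I would take a nonzero socle element, read it as a nonsplit extension of $M$ by $\tau M$, and verify in the classical manner (as in \cite[Chapter 3]{Y}) that the associated short exact sequence is both right and left almost split, hence an Auslander--Reiten sequence.

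The main obstacle is the construction of the graded Auslander--Reiten duality in the second step: setting up graded local (Matlis) duality over the merely ${}^\ast$local ring $R$, and pinning down the exact syzygy power and grading twist entering $\tau$ and $D$, so that the finite-length and local-freeness bookkeeping runs exactly as in the complete local case. Once the duality and the finite-length statement are secured, the passage from a simple socle to a genuine almost split sequence is formal, so I would lean on \cite{IT10} for the graded duality and treat the remainder as a direct transcription of \cite[Chapter 3]{Y}.
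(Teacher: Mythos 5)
The paper offers no proof of this theorem---it is quoted from \cite{AR87, AR88, Y, IT10}---and your sketch is precisely the standard argument from those sources: construct $\tau$ via the transpose, syzygies and a twist by $\omega_R = R(l)$, establish graded Auslander--Reiten duality, use freeness on the punctured spectrum to get finite length of the stable Hom, and extract the almost split sequence from the socle of the dual of the local stable endomorphism ring. The outline is correct and consistent with the cited references (only note that the socle is \emph{simple} rather than necessarily one-dimensional over $k$ when $k$ is not algebraically closed, which does not affect the argument).
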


\begin{definition}\label{graded isolated singularity}
We say that $(R, \m)$ is a graded isolated singularity if each graded localization $R_{(\p )}$ is regular for each graded prime ideal $\p$ with $\p \not= \m$.   
\end{definition}

It is easy to see that $\grCMR = \grCMPSFR$ if $R$ is a graded isolated singularity, so that $\grCMR$ admits AR sequences. 
We denote by $\mu (M, Z)$ the multiplicity of $Z$ as a direct summand of $M$.

\begin{theorem}\label{hom order}
Let $R$ be a graded Gorenstein ring with $R_0$ is an algebraically closed field and let $M$ and $N$ graded Cohen-Macaulay $R$-modules. 
Assume that $R$ is a graded isolated singularity. 
Then $[M, X] = [N, X]$ for each $X \in \grCMR$ if and only if $M \cong N$. 
Particularly, $\homo$ is a partial order on $\grCMR$. 
\end{theorem}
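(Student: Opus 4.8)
The plan is to prove the nontrivial implication, that $[M,X]=[N,X]$ for every $X\in\grCMR$ forces $M\cong N$; the converse is immediate, and once this equivalence is in hand the partial order claim is formal. Indeed, reflexivity and transitivity are clear from Definition \ref{dfn of hom order}, while antisymmetry is exactly the equivalence, since $M\homo N$ together with $N\homo M$ gives $[M,X]=[N,X]$ for all $X$. Because $\grCMR$ is Krull--Schmidt, it suffices to recover from the function $X\mapsto[M,X]$ the multiplicity $\mu(M,Z)$ of each indecomposable $Z$ as a direct summand of $M$; then $\mu(M,Z)=\mu(N,Z)$ for all $Z$ yields $M\cong N$. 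The main tool is the theory of AR sequences in $\grCMR$, which is available because $R$ is a graded isolated singularity, so that $\grCMR=\grCMPSFR$ admits AR sequences by Theorem \ref{AR-sequence}. All hom spaces below are finite dimensional by Remark \ref{rmk of hom order}.

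First I would treat the non-free indecomposables. For such a $Z$ there is an AR sequence $0\to\tau Z\to E_Z\to Z\to 0$ with all terms in $\grCMR$. Applying $\HomR(M,-)$ and using left exactness gives the exact sequence
$$
0\to\HomR(M,\tau Z)\to\HomR(M,E_Z)\to\HomR(M,Z)\to C\to 0,
$$
where $C$ is the cokernel of the last map. The key claim is $\dim_k C=\mu(M,Z)$. By additivity over direct summands this reduces to $M$ indecomposable: if $M\not\cong Z$ then no map $M\to Z$ is a split epimorphism, so every map factors through $E_Z\to Z$ by the defining property of the AR sequence and $C=0$; if $M\cong Z$ then the maps factoring through $E_Z\to Z$ are precisely the non-isomorphisms, i.e. $\mathrm{rad}\,\End(Z)$, so $C\cong\End(Z)/\mathrm{rad}\,\End(Z)$. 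Here the hypothesis that $R_0=k$ is algebraically closed is essential: it forces this residue division ring to equal $k$, hence $\dim_k C=1$. Combining the claim with the displayed exact sequence gives the closed formula
$$
\mu(M,Z)=[M,\tau Z]-[M,E_Z]+[M,Z],
$$
expressed entirely in hom numbers. The same holds for $N$, so $\mu(M,Z)=\mu(N,Z)$ for every non-free indecomposable $Z$.

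It remains to match the free (equivalently projective) summands, which is the one place the AR argument does not reach, since no AR sequence ends in a module of the form $R(i)$. Write $M\cong M_0\oplus P_M$ and $N\cong N_0\oplus P_N$ with $M_0,N_0$ having no free summand and $P_M,P_N$ graded free. By the previous step $M_0\cong N_0$, hence $[P_M,X]=[P_N,X]$ for all $X\in\grCMR$. Evaluating at $X=\omega_R(j)$ for all $j\in\Z$ computes the graded dimensions of $P_M^{\ast}$ and $P_N^{\ast}$, so $h(P_M^{\ast})=h(P_N^{\ast})$ and therefore $h(P_M)=h(P_N)$ by \cite[Theorem 4.4.5]{BH}. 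Since a graded free module is determined up to isomorphism by its Hilbert function (the Hilbert series of $R$ is invertible as a power series because $\dim_k R_0=1$), this gives $P_M\cong P_N$, and hence $M\cong N$.

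I expect the crux to be the multiplicity formula, and in particular the identification $\dim_k C=\mu(M,Z)$: it is exactly here that both the universal property of AR sequences and the algebraically closed hypothesis enter, and it is what converts the qualitative information "$[M,-]=[N,-]$" into the quantitative statement "$\mu(M,Z)=\mu(N,Z)$". The free-summand step is a secondary, duality-theoretic point that I would handle by the Hilbert-function bookkeeping above.
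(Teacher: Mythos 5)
Your proof is correct, and while the non-free part coincides with the paper's argument, your treatment of the free summands takes a genuinely different route. For a non-free indecomposable $Z$ you do exactly what the paper does: apply $\HomR(M,-)$ to the AR sequence ending in $Z$ and identify the dimension of the cokernel with $\mu(M,Z)$, using that $k$ is algebraically closed to get $\End(Z)/\mathrm{rad}\,\End(Z)\cong k$. For the free summands the paper builds a substitute for an AR sequence ending in $R$: it takes the maximal Cohen--Macaulay approximation $0\to Y\to X\to\m\to 0$, composes with $\m\hookrightarrow R$ to obtain a map $f\colon X\to R$ through which precisely the non-split maps $M\to R$ factor, and then must invoke Lemma \ref{pd finite} (together with the implicit observation that the hypothesis forces $h(M)=h(N)$, by testing against shifted free modules) because the kernel term $K$ is only of finite projective dimension, not Cohen--Macaulay. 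You instead cancel the already-matched non-free parts by Krull--Schmidt, reduce to $[P_M,X]=[P_N,X]$ for graded free modules $P_M$, $P_N$, and recover their Hilbert functions by testing against $\omega_R(j)$; since the Hilbert series of $R$ has invertible constant term, the multiset of twists in a graded free module is determined by its Hilbert function, so $P_M\cong P_N$. Your route is more elementary --- it avoids Cohen--Macaulay approximations and Lemma \ref{pd finite} altogether --- at the cost of leaning on Krull--Schmidt cancellation; the paper's route is more uniform, treating $R$ by the same ``almost split'' counting template as the non-free case and producing $\mu(M,R(i))$ directly as an alternating sum of hom numbers. Both are complete proofs of the statement.
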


\begin{proof}
We decompose $M$ as $M = \oplus M_{i}^{\mu (M, M_i)}$ where $M_i$ are indecomposable graded Cohen-Macaulay $R$-modules. 
If $M_i$ is not free, we can take the AR sequence ending in $M_i$ 
$$
0 \to \tau M_i \to E_i \to M_i \to 0, 
$$
where $\tau M_i$ is an AR translation of $M_i$. 
Apply $\Hom _R (M, -)$ and $\Hom _R (N, -)$ to the sequence, since $k$ is an algebraically closed field, we have
$$
0 \to \Hom _R (M, \tau M_i ) \to \Hom _R (M, E_i )\to \Hom _R (M, M_i ) \to k^{\mu (M, M_i)} \to 0
$$
and
$$
0 \to \Hom _R (N, \tau M_i ) \to \Hom _R (N, E_i )\to \Hom _R (N, M_i ) \to k^{\mu (N, M_i)} \to 0.  
$$
Counting the dimensions of terms, we conclude that $\mu (M, M_i) = \mu (N, M_i)$. 

If $M_i$ is free, we may assume that $M_i = R$. 
Let $\m$ be a ${}^{\ast}$maximal ideal of $R$.  
We consider the Cohen-Macaulay approximation (see \cite{AB89}) of $\m$
$$
0 \to Y \to X \to \m \to 0.
$$  
We note that $X$ is a graded Cohen-Macaulay $R$-module and $Y$ is of finite injective dimension. 
We also note that $Y$ is of finite projective dimension since $R$ is Gorenstein. 
Let $f$ be a composition map of the approximation $X \to \m$ and a natural inclusion $\m \to R$. 
Then we get the following commutative diagram.
$$
\begin{CD} 
0 @>>> K @>>> X @>{f}>> R  @.\\
   @. @AA{g}A   @AA{=}A @AA{\subseteq}A \\
0 @>>> Y @>>> X @>>> \m @>>> 0. \\
\end{CD} 
$$
By a diagram chasing we see that $g$ is surjective, so that $K$ is of finite projective dimension. 
Now we obtain an exact sequence 
$$
0 \to \Hom _R (M, K ) \to \Hom _R (M, X )\to \Hom _R (M, R ) \to k^{\mu (M, R)} \to 0. 
$$
According to Lemma \ref{pd finite}, $[M, K] = [N, K]$.  
Thus
$$
[M, R] + [M, K] - [M, X] = [N, R] + [N, K] - [N, X]. 
$$
Hence $\mu (M, R) = \mu (N, R)$. 
Consequently $M \cong N$. 
\end{proof}

%%%%%%%%%%%%%%%%%%%%%%%%%%%%%%%%%%%%%%%%%%%%%%%%%%%%%%%%%
%%%%%%%%%%%%%%%%%%%%%%%%%%%%%%%%%%%%%%%%%%%%%%%%%%%%%%%%%
%%%%%%%%%%%%%%%%%%%%%%%%%%%%%%%%%%%%%%%%%%%%%%%%%%%%%%%%%
%%%%%%%%%%%%%%%%%%%%%%% Section 3 %%%%%%%%%%%%%%%%%%%%%%%
%%%%%%%%%%%%%%%%%%%%%%%%%%%%%%%%%%%%%%%%%%%%%%%%%%%%%%%%%
%%%%%%%%%%%%%%%%%%%%%%%%%%%%%%%%%%%%%%%%%%%%%%%%%%%%%%%%%
%%%%%%%%%%%%%%%%%%%%%%%%%%%%%%%%%%%%%%%%%%%%%%%%%%%%%%%%%
%%%%%%%%%%%%%%%%%%%%%%%%%%%%%%%%%%%%%%%%%%%%%%%%%%%%%%%%%
\section{Graded degenerations of graded Cohen-Macaulay modules}\label{Degenerations of graded Cohen-Macaulay modules}

We define a notion of degenerations for graded modules.

%%%%%%%%%%%%%%%%%%%%%%%%%%%%%%%%%%%%%%%%%%%%%
%%%%%%%%%%%%%%%%%%%%%%%%%%%%%%%%%%%%%%%%%%%%%
%%%%%%%%%% Definition of degenertions %%%%%%%
%%%%%%%%%%%%% of graded modules %%%%%%%%%%%%%%
%%%%%%%%%%%%%%%%%%%%%%%%%%%%%%%%%%%%%%%%%%%%%
%%%%%%%%%%%%%%%%%%%%%%%%%%%%%%%%%%%%%%%%%%%%%
\begin{definition}\label{degeneration}
Let $R$ be a Noetherian $\N$-graded ring where $R_0 = k$ is a field and let $V =k[[t]]$ with a trivial gradation and $K$ be the localization by $t$, namely $K = V_t = k(t)$. 
For finitely generated graded $R$-modules $M$ and $N$, we say that $M$ gradually degenerates to $N$ or $N$ is a graded degeneration of $M$ if there is a finitely generated graded $R\otimes _{k} V$-module $Q$ which satisfies the following conditions: 

\begin{itemize}
\item[(1)] $Q$ is flat as a $V$-module.

\item[(2)] $Q \otimes _V V/tV \cong N$ as a graded $R$-module. 

\item[(3)] $Q\otimes _V K \cong M\otimes _{k} K$ as a graded $R\otimes _{k} K$-module.
\end{itemize}
\end{definition}

In \cite{Y04}, Yoshino gives a necessary and sufficient condition for degenerations of (non-graded) modules.  
One can also show its graded version in a similar way. 
See also \cite{R86, Z00}.

%%%%%%%%%%%%%%%%%%%%%%%%%%%%%%%%%%%%%%%%%%%%%
%%%%%%%%%%%%%%%%%%%%%%%%%%%%%%%%%%%%%%%%%%%%%
%%%%%%%%%%%%%%%%%%%%%%%%%%%%%%%%%%%%%%%%%%%%%
%%%%%%%% Theorem of Zwara's sequence %%%%%%%%
%%%%%%%%%%%%%%%%%%%%%%%%%%%%%%%%%%%%%%%%%%%%%
%%%%%%%%%%%%%%%%%%%%%%%%%%%%%%%%%%%%%%%%%%%%%
\begin{theorem}\label{Zwara sequence}\cite[Theorem 2.2]{Y04}
The following conditions are equivalent for finitely generated graded $R$-modules $M$ and $N$.

\begin{itemize}
\item[(1)] $M$ gradually degenerates to $N$. 

\item[(2)] There is a short exact sequence of finitely generated graded $R$-modules
$$
\begin{CD}
0 @>>> Z @>>> M\oplus Z \ @>>> N @>>> 0.   \\ 
\end{CD}
$$ 
\end{itemize}
\end{theorem}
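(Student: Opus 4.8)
The plan is to transcribe Yoshino's proof of \cite[Theorem 2.2]{Y04} into the graded setting, carrying the $\Z$-grading through every construction and isolating the one place where the grading does genuine work. I treat the two implications separately.

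For $(2)\Rightarrow(1)$, suppose we are given a short exact sequence
$$
0 \to Z \to M\oplus Z \to N \to 0
$$
in $\grmodR$, with the inclusion having homogeneous degree-$0$ components $f\colon Z\to M$ and $g\colon Z\to Z$; note that $g$ lies in $\End_{\grmodR}(Z) = \grEnd_R(Z)_0$. With $V = k[[t]]$ and $t$ placed in degree $0$, I would define $\theta\colon Z\otimes_k V \to (M\oplus Z)\otimes_k V$ by $\theta(z\otimes v) = (f(z)\otimes v,\ z\otimes tv - g(z)\otimes v)$ and set $Q := \mathrm{coker}(\theta)$. Since $V$ sits in degree $0$ and both entries of $\theta$ are homogeneous of degree $0$, $Q$ is a finitely generated graded $R\otimes_k V$-module, and it is the natural candidate for the degenerating family.

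Three things then have to be checked, and only one of them uses the grading. Reducing $\theta$ modulo $t$ gives the map with components $(f,-g)$, whose cokernel is $N$, so $Q\otimes_V V/tV\cong N$. For the generic fibre I must show that $t - g\otimes 1$ is invertible on $Z\otimes_k K$; granting this, projecting $(M\oplus Z)\otimes_k K$ onto $M\otimes_k K$ identifies $Q\otimes_V K$ with $M\otimes_k K$. This invertibility is exactly where the grading is indispensable: because $Z$ is a finitely generated graded module over the $\N$-graded ring $R$ with $R_0 = k$, every graded piece of $\grEnd_R(Z)$ is finite dimensional over $k$, so $\End_{\grmodR}(Z)$ is a finite-dimensional $k$-algebra and $g$ satisfies a monic $p(X)\in k[X]$. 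Writing $p(t) - p(g) = (t-g)\,h(t,g)$ and using $p(g)=0$ yields $(t-g)\,h(t,g) = p(t)\cdot\mathrm{id}$ with $p(t)\in k[t]$ a unit of $K$, so $t-g$ is invertible over $K$. Finally, $V$-flatness reduces to $t$ being a nonzerodivisor on $Q$: the invertibility just proved makes $\theta$ injective, so $0\to Z\otimes_k V\xrightarrow{\theta}(M\oplus Z)\otimes_k V\to Q\to 0$ is a presentation by $V$-free modules, and the long exact Tor sequence identifies the $t$-torsion of $Q$ with $\Ker(f,-g)=0$.

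For $(1)\Rightarrow(2)$, I start from a family $Q$ as in Definition \ref{degeneration}. Flatness over the discrete valuation ring $V$ makes $t$ a nonzerodivisor, so $0\to Q\xrightarrow{t}Q\to N\to 0$ is exact and $Q$ embeds as an $R\otimes_k V$-lattice in $Q\otimes_V K\cong M\otimes_k K$. Both $Q$ and $M\otimes_k V$ are then lattices spanning the same space over $K$, so $t^s(M\otimes_k V)\subseteq Q\subseteq t^{-s}(M\otimes_k V)$ for some $s$; this commensurability is what guarantees that the comparison modules between them are annihilated by a power of $t$ and hence finitely generated and graded over $R$. Following Yoshino, I would apply the snake lemma to multiplication by $t$ on the inclusion of these two lattices to extract a finitely generated graded $R$-module $Z$ and assemble the required sequence $0\to Z\to M\oplus Z\to N\to 0$, all maps being homogeneous of degree $0$ because $t$ and the lattice comparisons are.

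The step I expect to be the real obstacle is this last extraction: producing from the abstract family $Q$ a single finitely generated graded $Z$ that simultaneously accounts for $M$ and $N$ in one three-term sequence, rather than the four-term snake sequence one first obtains, is the delicate part, and it is handled exactly as in the ungraded arguments of \cite{Y04, R86, Z00}. By contrast, the construction of $Q$ in $(2)\Rightarrow(1)$ is routine once one observes the single graded input, namely the finite dimensionality of $\End_{\grmodR}(Z)$ forcing $t-g$ to become invertible after inverting $t$.
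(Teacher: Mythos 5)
Your proposal is correct and follows exactly the route the paper intends: the paper gives no proof of Theorem \ref{Zwara sequence} beyond citing \cite[Theorem 2.2]{Y04} and asserting that the graded version "can be shown in a similar way," and your write-up is precisely that graded transcription, with the one genuinely graded ingredient (finite $k$-dimensionality of $\End_{\grmodR}(Z)$ forcing $t-g$ to be invertible over $K$ without a nilpotency hypothesis) correctly identified and matching the paper's own Remark \ref{remark of extension}(1). Your account of $(2)\Rightarrow(1)$ is complete, and your sketch of $(1)\Rightarrow(2)$ defers the lattice-comparison extraction of $Z$ to \cite{Y04, R86, Z00} exactly as the paper itself does.
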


%%%%%%%%%%%%%%%%%%%%%%%%%%%%%%%%%%%%%%%%%%%%%
%%%%%%%%%%%%%%%%%%%%%%%%%%%%%%%%%%%%%%%%%%%%%
%%%%%%%%%%%%%%%%%%%%%%%%%%%%%%%%%%%%%%%%%%%%%
%%%%%%% Remark of Extaensions %%%%%%%%%%%%%%%
%%%%%%%%%%%%%%%%%%%%%%%%%%%%%%%%%%%%%%%%%%%%%
%%%%%%%%%%%%%%%%%%%%%%%%%%%%%%%%%%%%%%%%%%%%%
%%%%%%%%%%%%%%%%%%%%%%%%%%%%%%%%%%%%%%%%%%%%%
\begin{remark}\label{remark of extension}
\begin{itemize}
\item[(1)] As Yoshino has shown in \cite{Y04}, the endomorphism of $Z$ in the sequence of Theorem \ref{Zwara sequence} is nilpotent. 
Note that we do not need the nilpotency assumption here. 
Actually, since $\End _R (Z)$ is Artinian, by using Fitting theorem, we can describe the endomorphism as a direct sum of an isomorphism and a nilpotent morphism. 
See also \cite[Remark 2.3.]{Y04}.

\item[(2)] Assume that $M$ and $N$ are graded Cohen-Macaulay modules. 
Then we can show that $Z$ is also a graded Cohen-Macaulay $R$-module. 
See \cite[Remark 4.3.]{Y04}.

\item[(3)]
Assume that there is an exact sequence of finitely generated graded $R$-modules 
$$
\begin{CD}
0 @>>> L @>>> M @>>> N @>>> 0. 
\end{CD}
$$ 
Then $M$ gradually degenerates to $L \oplus N$. 
See \cite[Remark 2.5]{Y04} for instance. 
\item[(4)]
Let  $M$  and $N$  be finitely generated graded $R$-modules and suppose that  $M$  gradually degenerates to $N$. 
Then the modules $M$ and $N$ give the same class in the Grothendieck group. 
\end{itemize}
\end{remark}

%%%%%%%%%%%%%%%%%%%%%%%%%%%%%%%%%%%%%%%%%%
%%%%%%%%%%%%%%%%%%%%%%%%%%%%%%%%%%%%%%%%%%
%%%%%%%%%%%%%%%%%%%%%%%%%%%%%%%%%%%%%%%%%%
%%%%%%%%%%%%%%%%%%%%%%%%%%%%%%%%%%%%%%%%%%
We can also prove in a similar way to the proof of \cite[Theroem 2.1.]{Z98} that, for $L$, $M$, $N \in \grmodR$, if $L$ gradually degenerates to $M$ and if $M$ gradually degenerates to $N$ then $L$ gradually degenerates to $N$.  
And one can show that if $L$ gradually degenerates to $M$ then $L \homo M$.  
Consideration into this fact we define partial orders as follows. 
%%%%%%%%%%%%%%%%%%%%%%%%%%%%%%%%%%%%%%%%%%
%%%%%%%%%%%%%%%%%%%%%%%%%%%%%%%%%%%%%%%%%%
%%%%%%%%%%%%%%%%%%%%%%%%%%%%%%%%%%%%%%%%%%
%%%%%%% Definition of orders  %%%%%%%%%%
%%%%%%%%%%%%%%%%%%%%%%%%%%%%%%%%%%%%%%%%%%
%%%%%%%%%%%%%%%%%%%%%%%%%%%%%%%%%%%%%%%%%%
%%%%%%%%%%%%%%%%%%%%%%%%%%%%%%%%%%%%%%%%%%
\begin{definition}\label{dfn of orders}
For finitely generated graded $R$-modules $M$, $N$, we define the relation $M \dego N$, which is called the degeneration order, if $M$ gradually degenerates to $N$. 
We also define the relation $M \exto N$ if there are modules $M_i$, $N{'}_i$, $N{''}_i$ and short exact sequences $0 \to N{'}_i \to M_i \to N{''}_i \to 0$ in $\grCMR$ so that $M = M_1$, $M_{i +1} = N{'}_i \oplus N{''}_i$, $1 \leq i \leq s$ and $N = M_{s +1}$ for some $s$.
\end{definition}

For $M \in \grCMR$, we take a first syzygy module of $M^{\ast}$ 
$$
0 \to \Omega^1 M^{\ast} \to F \to M^{\ast} \to 0.
$$
Applying $\grHomR (-, \omega _R)$ to the sequence, we have
$$
0 \to M^{\ast \ast } \cong M \to F^{\ast } \to (\Omega^1 M^{\ast })^{\ast } \to 0.
$$
Then we denote $(\Omega^1 M^{\ast })^{\ast }$ by $\Omega ^{-1} M$.  

%%%%%%%%%%%%%%%%%%%%%%%%%%%%%%%%%%%%%%%%%%
%%%%%%%%%%%%%%%%%%%%%%%%%%%%%%%%%%%%%%%%%%
%%%%%%%%%%%%%%%%%%%%%%%%%%%%%%%%%%%%%%%%%%
%%%%%%%%%%%%%%%%%%%%%%%%%%%%%%%%%%%%%%%%%%
	
\begin{lemma}\label{key A}
Let $R$ be a graded isolated singularity with $\omega _R$ and let $M$ and $N$ be graded Cohen-Macaulay $R$-modules. 
Assume that $h(M) = h(N)$ and $M \homo N$. 
Then, for each graded Cohen-Macaulay $R$-module $X$, there exists an  integer $l_X \gg 0$ such that $[M, X(\pm l_X)] = [N, X(\pm l_X)]$.
\end{lemma}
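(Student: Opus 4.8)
The plan is to prove the substantive half of the statement, namely that $[M, X(l)] = [N, X(l)]$ for all $l \gg 0$; the companion equality for the negative shift $X(-l_X)$ is then automatic. Indeed, a homogeneous degree-$0$ map $M \to X(-l_X)$ must carry each of the finitely many generators of $M$, whose degrees are bounded, into a component of $X$ of very negative degree, and these vanish once $l_X$ exceeds the spread between the top generating degree of $M$ and the bottom degree of $X$; hence $[M, X(-l_X)] = 0 = [N, X(-l_X)]$ for $l_X \gg 0$. Writing $\delta_Y(l) = [N, Y(l)] - [M, Y(l)]$, the hypothesis $M \homo N$ gives $\delta_Y(l) \geq 0$ for every $Y \in \grCMR$ and every $l$, so the whole problem reduces to forcing $\delta_X(l) = 0$ for large $l$.

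The first step is to linearize $\delta_X$ against a syzygy. I would choose a graded free cover and form the exact sequence $0 \to \Omega^1 X \to F \to X \to 0$ with $F$ free; here $\Omega^1 X$ is again graded Cohen--Macaulay, being a first syzygy of a graded Cohen--Macaulay module over a Gorenstein ring (depth lemma). Applying $\grHomR(M, -)$ and extracting the degree-$l$ part produces a four-term exact sequence whose connecting term is $\grExt^1_R(M, \Omega^1 X)_l$ and in which $\grExt^1_R(M, F)_l = 0$, because $M$ is graded Cohen--Macaulay and $R$ is Gorenstein, so $\grExt^{>0}_R(M, R) = 0$. Counting dimensions gives
\[
[M, X(l)] = [M, F(l)] - [M, \Omega^1 X(l)] + \dim_k \grExt^1_R(M, \Omega^1 X)_l,
\]
and likewise for $N$.

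The crucial input is that the error term has finite length. For any graded prime $\p \neq \m$ the ring $R_{(\p)}$ is regular, since $R$ is a graded isolated singularity, so $M_{(\p)}$ is free and the localized $\Ext^1$ vanishes; thus $\grExt^1_R(M, \Omega^1 X)$ is supported only at $\m$ and is nonzero in merely finitely many degrees. Consequently $\grExt^1_R(M, \Omega^1 X)_l = 0 = \grExt^1_R(N, \Omega^1 X)_l$ for all $l \gg 0$, yielding the clean two-term identities $[M, X(l)] = [M, F(l)] - [M, \Omega^1 X(l)]$ and $[N, X(l)] = [N, F(l)] - [N, \Omega^1 X(l)]$. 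Since $F$ has finite projective dimension and $h(M) = h(N)$, Lemma \ref{pd finite} gives $[M, F(l)] = [N, F(l)]$ (its proof only uses that $[-, \text{free}]$ is determined by the Hilbert series, so it applies without the indecomposability hypothesis). Subtracting, I obtain the key relation $\delta_X(l) = -\,\delta_{\Omega^1 X}(l)$ for all $l \gg 0$.

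To finish I invoke $M \homo N$ twice: applied to $X$ it gives $\delta_X(l) \geq 0$, and applied to the graded Cohen--Macaulay module $\Omega^1 X$ it gives $\delta_{\Omega^1 X}(l) \geq 0$, i.e. $-\delta_X(l) \geq 0$. Hence $\delta_X(l) = 0$ for all $l \gg 0$, which is exactly $[M, X(l)] = [N, X(l)]$; choosing $l_X$ large enough to also realize the trivial negative-shift vanishing completes the argument. The step I expect to be the main obstacle is precisely the passage to the two-term identity, that is, controlling the connecting $\grExt^1$ term: it is here that the isolated-singularity hypothesis is indispensable, for it is what makes the Ext module finite length and therefore zero in high degrees. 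The sign-flip $\delta_X = -\delta_{\Omega^1 X}$, which upgrades the one-sided inequality coming from $\homo$ into an equality, is the conceptual heart of the proof but is inexpensive once the identity is secured.
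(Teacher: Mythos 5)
Your argument is correct and is essentially the mirror image of the paper's: where you resolve $X$ by a free cover $0 \to \Omega^1 X \to F \to X \to 0$ and flip the sign of the deficiency $\delta$ onto the syzygy, the paper uses the $\omega_R$-hull $0 \to X \to E \to \Omega^{-1}X \to 0$ (with $E$ a direct sum of shifts of $\omega_R$) and flips the sign onto the cosyzygy; in both cases the isolated-singularity hypothesis makes the connecting $\grExt^1$ term finite length and hence zero in high degrees, $h(M)=h(N)$ kills the middle term, and applying $\homo$ to both $X$ and its (co)syzygy forces $\delta_X(l)=0$. The one substantive difference is that your version invokes the Gorenstein property twice --- to get $\grExt^1_R(M,F)=0$ for $F$ free and to deduce $[M,F(l)]=[N,F(l)]$ from $h(M)=h(N)$, both of which pass through $\omega_R\cong R(a)$ --- whereas the lemma as stated assumes only a graded isolated singularity with canonical module, and the paper's choice of $E$ as a sum of shifts of $\omega_R$ is precisely what makes both points work without Gorenstein: $[M,\omega_R(n)]=\dim_k(M^{\ast})_n$ is determined by $h(M^{\ast})=h(N^{\ast})$, and $\grExt^{>0}_R(M,\omega_R)=0$ holds by the definition of graded Cohen--Macaulay. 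Since the lemma is only ever applied to Gorenstein rings in this paper, this costs nothing in practice. Your separate, elementary disposal of the negative shifts ($[M,X(-l)]=0=[N,X(-l)]$ for $l\gg 0$ by degree reasons) is fine and arguably cleaner than the paper's uniform treatment of $\pm l$, and your observation that Lemma \ref{pd finite} does not actually use indecomposability is accurate.
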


\begin{proof}
For each $X \in \grCMR$, we can take an exact sequence as above
$$
0 \to X \to E \to \Omega ^{-1} X \to 0.
$$
Note that $E$ is a direct sum of $\omega _R (n)$ for some integers $n$.  
Applying $\grHom _R (M, -)$ to the sequence, we have
$$
0 \to \grHom _R (M, X) \to \grHom _R (M, E)\to \grHom _R (M, \Omega^{-1} X)\to \grExt _R (M, X) \to 0. 
$$  
Since $R$ is a graded isolated singularity, $\dim \grExt _R (M, X)$ is finite. 
Thus $\grExt _R (M, X)_{\pm l_1} = 0$ for sufficiently large $l_1 \gg 0$. 
Similarly there also exists an integer $l_2  \gg 0 $ so that $\grExt _R (N, X)_{\pm l_2} = 0$. 

Set $l = max \{ l_1, l_2 \}$. 
Then we have  
$$
0 \to \grHom _R (M, X)_{\pm l} \to \grHom _R (M, E)_{\pm l} \to \grHom _R (M, \Omega^{-1} X)_{\pm l} \to 0. 
$$
Therefore we obtain the equation
$$
[M, X(\pm l)] = [M,  E(\pm l)] -  [M, \Omega ^{-1} X(\pm l)].
$$
We also have  
$$
[N, X(\pm l)] = [N,  E(\pm l)] -  [N, \Omega ^{-1} X(\pm l)].
$$
Suppose that $ [M, X(\pm l)] < [N, X(\pm l)]$. 
Then the following inequality holds.
$$
[M,  E(\pm l)] -  [M, \Omega ^{-1} (X)(\pm l)] <  [N,  E(\pm l)] -  [N, \Omega ^{-1} X(\pm l)]. 
$$  
Since $h(M) = h(N)$, $[M,  E(\pm l)] = [N,  E(\pm l)]$. 
Hence we see that $[M, \Omega ^{-1} X(\pm l)]  > [N, \Omega ^{-1} X(\pm l)] $. 
This is a contradiction since $M \homo N$. 
Therefore we have some integer $l$ such that $ [M, X(\pm l)] = [N, X(\pm l)]$.  
\end{proof}

We say that the category $\grCMR$ is of graded finite representation type if there are only a finite number of isomorphism classes of indecomposable graded Cohen-Macaulay modules up to shift. We note that if $\grCMR$ is of finite representation type, then $R$ is a graded isolated singularity. 
See \cite[Chapter 15.]{Y} for the detail.     

As an immediate consequence of Lemma \ref{key A}, we have the following.

\begin{corollary} \label{key B}
Let $R$ be of finite representation type and let $M$ and $N$ be graded Cohen-Macaulay $R$-modules. 
Assume that $h(M) = h(N)$ and $M \homo N$. Then there are only finitely many  indecomposable graded Cohen-Macaulay $R$-modules $X$ such that $[N, X]-[M, X] >0$. 
\end{corollary}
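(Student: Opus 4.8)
The plan is to deduce this directly from Lemma \ref{key A} by exploiting the finiteness built into graded finite representation type. Since $\grCMR$ is of graded finite representation type, there are indecomposable graded Cohen-Macaulay modules $Y_1, \dots, Y_r$ such that every indecomposable graded Cohen-Macaulay $R$-module is isomorphic to $Y_j(l)$ for some $1 \le j \le r$ and some $l \in \Z$; moreover $R$ is then automatically a graded isolated singularity, so Lemma \ref{key A} applies. Because $M \homo N$, we have $[M, X] \le [N, X]$ for every $X \in \grCMR$, so the difference $[N, X] - [M, X]$ is always nonnegative and the content of the statement is to bound the locus where it is strictly positive.

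First I would apply Lemma \ref{key A} to each of the finitely many representatives $Y_1, \dots, Y_r$. The essential point is that the lemma produces equality of $[M, Y_j(l)]$ and $[N, Y_j(l)]$ not merely for a single shift but for all shifts of sufficiently large absolute value: in its proof the relevant input is the vanishing of the graded components $\grExt_R(M, Y_j)_{\pm l}$ and $\grExt_R(N, Y_j)_{\pm l}$, and since $R$ is a graded isolated singularity these $\grExt$ modules are finite-dimensional over $k$, hence vanish in all degrees of large enough absolute value. Thus for each $j$ there is an integer $l_j \gg 0$ with $[M, Y_j(l)] = [N, Y_j(l)]$ whenever $|l| \ge l_j$.

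Consequently, for a fixed $j$ the quantity $[N, Y_j(l)] - [M, Y_j(l)]$ can be positive only for the finitely many shifts $l$ with $|l| < l_j$. Taking the union over the finitely many indices $j$, only finitely many indecomposable graded Cohen-Macaulay modules $X$ satisfy $[N, X] - [M, X] > 0$, as claimed.

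I expect the only real subtlety to be the observation in the second paragraph that Lemma \ref{key A} controls all large shifts of a given $X$ at once, in both directions, rather than guaranteeing equality at just one shift; this is exactly what the finite-dimensionality of the $\grExt$ groups provides, and once it is in hand the corollary reduces to counting over the finite list of shift-representatives.
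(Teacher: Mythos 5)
Your proof is correct and is precisely the intended unpacking of the paper's remark that the corollary is "an immediate consequence of Lemma \ref{key A}": reduce to the finitely many shift-representatives given by graded finite representation type, and note that the proof of Lemma \ref{key A} (via finite-dimensionality of $\grExt_R(M,X)$ and $\grExt_R(N,X)$) yields equality $[M,X(l)]=[N,X(l)]$ for \emph{all} shifts $l$ of sufficiently large absolute value, not just one. The paper gives no separate argument, so there is nothing further to compare.
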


For graded Cohen-Macaulay $R$-modules $M$ and $N$, we consider the following set of all the isomorphism classes of indecomposable graded Cohen-Macaulay modules 
$$
\FMN = \{ X \ | \ [N, X]-[M, X] >0 \ \} / \cong .
$$
Note from Corollary \ref{key B} that $\FF$ is a finite set if $h(M) = h(N)$ and $M \homo N$. 
We also note that $\omega _R \not \in \FF$ in the case.

%%%%%%%%%%%%%%%%%%%%%%%%%%%%%
%%%%%%%%%%%%%%%%%%%%%%%%%%%%%
%%%%%%%%%%%%%%%%%%%%%%%%%%%%%
%% Virtual degeneration 
%%%%%%%%%%%%%%%%%%%%%%%%%%%%%
%%%%%%%%%%%%%%%%%%%%%%%%%%%%%
\begin{proposition}\cite{R86}\label{Riedtmann}
Let $R$ be a graded Gorenstein ring which is of graded finite representation type and let $M$ and $N$ be graded Cohen-Macaulay $R$-modules. 
Assume that $h(M) = h(N)$ and $M \homo N$. 
Then there exists some graded Cohen-Macaulay $R$-module $L$ such that $M \oplus L$ degenerates to $N \oplus L$. 
\end{proposition}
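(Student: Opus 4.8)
The plan is to prove the statement by induction on the nonnegative integer
$$
\partial(M,N) \;=\; \sum_{X} \bigl([N,X]-[M,X]\bigr),
$$
where $X$ ranges over the isomorphism classes, up to shift, of indecomposable graded Cohen-Macaulay $R$-modules. Since $M \homo N$ every summand is $\ge 0$, and by Corollary \ref{key B} only the finitely many $X \in \FMN$ contribute a positive term, so $\partial(M,N)$ is well defined and finite. If $\partial(M,N)=0$ then $[M,X]=[N,X]$ for all $X\in\grCMR$, whence $M\cong N$ by Theorem \ref{hom order}; taking $L=0$ (the split sequence) disposes of the base case.

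For the inductive step I would first isolate the following one-step claim: if $\partial(M,N)>0$ there exist graded Cohen-Macaulay modules $N'$ and $L_0$ with $h(N')=h(N)$, $M\homo N'$ and $\partial(M,N')<\partial(M,N)$, together with a short exact sequence
$$
0 \to N \to N'\oplus L_0 \to L_0 \to 0
$$
in $\grCMR$. Granting this, the proposition follows by assembling degenerations. Indeed, Remark \ref{remark of extension}(3) applied to the displayed sequence gives $N'\oplus L_0 \dego N\oplus L_0$, while the inductive hypothesis applied to the pair $(M,N')$ yields a module $L'$ with $M\oplus L' \dego N'\oplus L'$. The degeneration order is compatible with adding a fixed summand — if $0\to Z\to A\oplus Z\to B\to 0$ then $0\to Z\to (A\oplus C)\oplus Z\to B\oplus C\to 0$, so Theorem \ref{Zwara sequence} gives $A\dego B \Rightarrow A\oplus C\dego B\oplus C$ — and it is transitive. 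Setting $L=L'\oplus L_0$ we therefore obtain
$$
M\oplus L \;\dego\; N'\oplus L \;\dego\; N\oplus L,
$$
as required. Here $L$ is Cohen-Macaulay, being a direct sum of Cohen-Macaulay modules, and $N', L_0$ are Cohen-Macaulay since $\grCMR$ is closed under extensions over a Gorenstein isolated singularity.

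It remains to produce the one-step sequence, and this is the main obstacle. Here I would follow Riedtmann \cite{R86}: choose an indecomposable $X\in\FMN$ and use the Auslander-Reiten sequence attached to $X$ — available by Theorem \ref{AR-sequence}, since graded finite representation type forces a graded isolated singularity — to manufacture the nonsplit sequence $0\to N\to N'\oplus L_0\to L_0\to 0$ whose cokernel $N'$ strictly lowers the defect. The delicate points are exactly those of Riedtmann's combinatorial argument: converting the purely numerical gap $[N,X]>[M,X]$ into an honest nonsplit short exact sequence, arranging the middle term so that the new quotient $N'$ still satisfies $M\homo N'$ with the same Hilbert function, and verifying that $\partial$ actually drops. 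The finiteness of $\FMN$ from Corollary \ref{key B} is what guarantees that the induction terminates.
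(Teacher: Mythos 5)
Your inductive framework is internally consistent, but the proof has a genuine gap: the entire mathematical content of the proposition is concentrated in your ``one-step claim'' --- producing a short exact sequence $0 \to N \to N'\oplus L_0 \to L_0 \to 0$ with $h(N')=h(N)$, $M\homo N'$ and strictly smaller defect --- and you do not prove it. You explicitly label it ``the main obstacle'' and defer it to Riedtmann's combinatorial argument, listing the delicate points (turning the numerical gap $[N,X]>[M,X]$ into an honest nonsplit sequence, preserving $M\homo N'$, making $\partial$ drop) without resolving any of them. As written, the argument reduces the proposition to an unproved statement that is at least as hard as the proposition itself.

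The paper's proof shows these difficulties can be bypassed entirely: no induction is needed. One takes the direct sum
$$
\bigoplus_{X\in\FMN}\Sigma_X^{[N,X]-[M,X]} \;=\; \bigl(\,0\to U\to V\to W\to 0\,\bigr)
$$
of the AR sequences $\Sigma_X\colon 0\to X\to E_X\to \tau^{-1}X\to 0$ starting at the modules of $\FMN$, each taken with multiplicity equal to the defect at $X$. The almost split property gives, for every indecomposable $Z$, an exact sequence $0\to\Hom_R(W,Z)\to\Hom_R(V,Z)\to\Hom_R(U,Z)\to k^{[N,Z]-[M,Z]}\to 0$, hence $[M\oplus U\oplus W,Z]=[N\oplus V,Z]$ for all $Z$, and Theorem \ref{hom order} then forces $M\oplus U\oplus W\cong N\oplus V$. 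Since $V$ degenerates to $U\oplus W$ by Remark \ref{remark of extension}(3), $M\oplus V$ degenerates to $M\oplus U\oplus W\cong N\oplus V$, so $L=V$ works in a single step. The insight you are missing is that summing the AR sequences over all of $\FMN$ with the defect multiplicities makes the Hom-defect of the pair $(M\oplus U\oplus W,\,N\oplus V)$ vanish identically, so one never has to track how the defect decreases under a single modification.
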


\begin{proof}
Although a proof of the proposition is given in \cite{R86}, we refer the argument of the proof in the present paper. 
For this reason we briefly recall the proof of the proposition.

Since $R$ is a graded isolated singularity, $\grCMR$ admits AR sequences. 
For each $X \in \FMN$, we can take an AR sequence starting from $X$. 
$$
\Sigma _X : 0 \to X \to E_X \to \tau ^{-1} X \to 0.
$$

Now we consider a sequence which is a direct sum of $[N. X]-[M, X]$ copies of $\Sigma _X$ where $X$ runs through all modules in $\FMN$. 
Namely
$$
\bigoplus _{X \in \FMN} \Sigma _{X}^{[N. X]-[M, X]} =  0 \to U \to V \to W \to 0. 
$$ 
For any indecomposable $Z \in \grCMR$, we obtain 
$$
0 \to \Hom _R(W, Z) \to \Hom _R(V, Z) \to \Hom _R(U, Z) \to k ^{[N, Z]-[M, Z]} \to 0. 
$$
This implies that the equality
$$
[U, Z] + [W, Z] -[V, Z] = [N, Z]-[M, Z], 
$$
thus
$$
[U, Z]+ [W, Z] + [M, Z] = [N, Z]+ [V, Z] 
$$
holds for all $Z \in \grCMR$. 
Hence this yields that 
$$
M \oplus U \oplus W \cong N \oplus V. 
$$
Since $V$ degenerates to $U \oplus W$, therefore $M \oplus V$ degenerates to $M \oplus U \oplus W \cong N \oplus V$. 
\end{proof}

%%%%%%%%%%%%%%%%%%%%%%%%%%%%%
%%%%%%%%%%%%%%%%%%%%%%%%%%%%%
%%%%%%%%%%%%%%%%%%%%%%%%%%%%%
%%%%%%%%%%%%%%%%%%%%%%%%%%%%%
%%%%%%%%%%%%%%%%%%%%%%%%%%%%%
%%%%%%%%%%%%%%%%%%%%%%%%%%%%%
Now we focus on the case that a graded Gorenstein ring is of graded finite representation type and representation directed. 
We say that a graded Cohen-Macaulay ring $R$ is representation directed if the AR quiver of $\grCMPSFR$ has no oriented cyclic paths.  
Bongartz \cite{B96} has studied such a case over finite dimensional $k$-algebras. 
In our graded settings, the similar results hold.  
Actually we shall prove the following.

\begin{theorem}\label{Main theorem}
Let $R$ be a graded Gorenstein ring which is of graded finite representation type and representation directed. 
Then the following conditions are equivalent for $M$ and $N \in \grCMR$. 
 \begin{itemize}
 \item[(1)] $h(M) = h(N)$ and $M \homo N$. 
 \item[(2)] $M \dego N$. 
 \item[(3)] $M \exto N$.  
 \end{itemize}
 \end{theorem}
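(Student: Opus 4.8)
The plan is to prove the cycle of implications $(3) \Rightarrow (2) \Rightarrow (1) \Rightarrow (3)$, which yields the equivalence. Two of these implications are already essentially available: the implication $(3) \Rightarrow (2)$ follows directly from Remark \ref{remark of extension}(3) together with the transitivity of the degeneration relation recorded after Definition \ref{dfn of orders}, since each elementary step $0 \to N'_i \to M_i \to N''_i \to 0$ gives a degeneration $M_i \dego N'_i \oplus N''_i = M_{i+1}$, and chaining these produces $M \dego N$. The implication $(2) \Rightarrow (1)$ is also routine: if $M \dego N$ then $M \homo N$ by the remark preceding Definition \ref{dfn of orders}, and $h(M) = h(N)$ follows from Remark \ref{remark of extension}(4), since giving the same class in the Grothendieck group forces equal Hilbert functions.

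The substantive content lies in the implication $(1) \Rightarrow (3)$, and this is where the representation-directedness hypothesis is essential. First I would invoke Proposition \ref{Riedtmann}: assuming $h(M) = h(N)$ and $M \homo N$, there exists a graded Cohen-Macaulay module $L$ such that $M \oplus L$ degenerates to $N \oplus L$. The goal is to remove this stabilizing summand $L$ and, moreover, to upgrade the conclusion from a degeneration to an extension chain. The strategy is to argue by induction on a suitable measure of the ``distance'' between $M$ and $N$ — for instance the total defect $\sum_{X} \bigl([N,X] - [M,X]\bigr)$ taken over the finite set $\FMN$ of indecomposables guaranteed finite by Corollary \ref{key B}. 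When this defect is zero, Theorem \ref{hom order} forces $M \cong N$ and the empty chain witnesses $M \exto N$.

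For the inductive step, I would exploit the absence of oriented cycles in the AR quiver. Representation-directedness gives a partial order on the indecomposable graded Cohen-Macaulay modules in which AR sequences strictly decrease, so among the indecomposables $X$ contributing to the defect one can select a minimal (or maximal) one and use its AR sequence $\Sigma_X : 0 \to X \to E_X \to \tau^{-1}X \to 0$ to produce a single elementary extension step that strictly reduces the defect while preserving both $h(-)$ and the hom order $\homo N$. Concretely, one peels off one copy of the relevant short exact sequence, replacing $M$ by $N'_1 \oplus N''_1$, and checks that the new module still satisfies $h = h(N)$ and $\homo N$ with strictly smaller defect; induction then completes the extension chain down to $N$.

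The main obstacle I expect is precisely the bookkeeping in this inductive step: ensuring that the directedness of the AR quiver lets one choose, at each stage, an AR sequence whose use genuinely decreases the chosen invariant without creating new defect elsewhere — in other words, that one never has to ``climb'' against the partial order. This is exactly the role played by the acyclicity hypothesis, and the careful verification that the hom-order inequalities are preserved after each peeling (using that $E_X$ and the $\tau^{-1}X$ terms interact correctly with $[-, Z]$ for all $Z \in \grCMR$, as in the dimension count in the proof of Proposition \ref{Riedtmann}) is where the technical weight sits. Once directedness guarantees termination and monotonicity, assembling the elementary sequences into the chain $M = M_1, \ M_{i+1} = N'_i \oplus N''_i, \ N = M_{s+1}$ of Definition \ref{dfn of orders} is formal.
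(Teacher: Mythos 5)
Your treatment of $(3) \Rightarrow (2) \Rightarrow (1)$ agrees with the paper's (these are the easy implications), but your plan for the substantive direction has a genuine gap. You propose to prove $(1) \Rightarrow (3)$ directly by induction, the inductive step being to ``peel off one copy of the relevant short exact sequence, replacing $M$ by $N'_1 \oplus N''_1$.'' The definition of $\exto$, however, requires a short exact sequence whose \emph{middle term} is the current module $M_i$, and the AR sequence $0 \to X \to E_X \to \tau^{-1}X \to 0$ of a defect-minimal $X$ has middle term $E_X$, which need not be a direct summand of $M$. You never explain how to manufacture an exact sequence with $M$ in the middle, and there is no evident way to do so. The paper circumvents this by adjoining $E_X$ artificially: it shows $E \oplus M \exto X \oplus \tau^{-1}X \oplus M$ and applies the induction hypothesis to the pair $(X \oplus \tau^{-1}X \oplus M,\ E \oplus N)$, whose defect is $d-1$. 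But one must then cancel the auxiliary summand $E$ at the end, and cancellation (Lemma \ref{cancellation}(1)) is available only for the \emph{degeneration} order and only under the hypothesis $[E,M]=[E,N]$ --- which is exactly what Lemma \ref{minimal} supplies for a $\preceq$-minimal $X$, and is a key place where representation-directedness enters. Your proposal invokes minimality but never identifies the need for $[E,M]=[E,N]$, nor the cancellation step, nor the fact that cancellation forces one to prove $(1)\Rightarrow(2)$ rather than $(1)\Rightarrow(3)$.

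Consequently the paper closes the cycle differently from you: it proves $(1) \Rightarrow (2)$ by the induction just described, and obtains $(2) \Rightarrow (3)$ separately by Zwara's argument, using that $\Ext_R(X,X)=0$ for indecomposables over a representation-directed ring. Your suggestion to ``remove the stabilizing summand $L$'' coming from Proposition \ref{Riedtmann} meets the same obstruction: the $L$ produced there (a sum of middle terms of AR sequences) does not obviously satisfy $[L,M]=[L,N]$, so Lemma \ref{cancellation}(1) does not apply to it directly. To repair your argument you would need to (i) recast the induction as a statement about $\dego$, (ii) prove and use Lemma \ref{minimal} so that the cancellation lemma applies to $E$, and (iii) invoke Zwara's theorem for the final upgrade from $\dego$ to $\exto$.
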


To prove the theorem, we modify the arguments in \cite{B96}.

%%%%%%%%%%%%%%%%%%%%%%%%%%%%%
%%%%%%%%%%%%%%%%%%%%%%%%%%%%%
%%%%%%%%%%%%%%%%%%%%%%%%%%%%%
%% Cancellation property
%%%%%%%%%%%%%%%%%%%%%%%%%%%%%
%%%%%%%%%%%%%%%%%%%%%%%%%%%%%
\begin{lemma}[Cancellation property]\label{cancellation}
Let $M$, $N$ and $X$ be finitely generated graded $R$-modules. 

\begin{itemize}
\item[(1)] Assume that $[X, M] = [X, N]$. 
If $M \oplus X$ gradually degenerates to $N \oplus X$, $M$ gradually degenerates to $N$. 

\item[(2)] Assume that $R$ is Gorenstein and $M$ and $N$ graded Cohen-Macaulay $R$-modules. 
If $M$ gradually degenerates to $N \oplus F$ for some graded free $R$-module $F$ then $M/F$  gradually degenerates to $N$. 
\end{itemize}
\end{lemma}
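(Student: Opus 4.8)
The plan is to deduce both statements from the exact-sequence criterion of Theorem \ref{Zwara sequence}, exploiting the hom-condition to cancel the summand $X$ (resp. $F$).

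For part (1), I would first invoke Theorem \ref{Zwara sequence} for the degeneration $M\oplus X\dego N\oplus X$ to obtain a short exact sequence in $\grmodR$
\[
0\to Z\xrightarrow{\iota} M\oplus X\oplus Z\xrightarrow{\pi} N\oplus X\to 0,
\]
and write $\pi=(\pi_N,\pi_X)$ for its two components. Applying $\HomR(X,-)$ and using that all the Hom spaces are finite dimensional (Remark \ref{rmk of hom order}), the cokernel $C$ of $\HomR(X,\pi)$ fits into a four-term exact sequence whose alternating dimension count gives $\dim_k C=[X,N]-[X,M]$. The assumption $[X,M]=[X,N]$ then forces $\HomR(X,\pi)$ to be surjective, so the homogeneous degree-zero map $(0,\mathrm{id}_X)\in\HomR(X,N\oplus X)$ lifts to some $\psi\colon X\to M\oplus X\oplus Z$ with $\pi\psi=(0,\mathrm{id}_X)$. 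A short diagram chase shows that $\psi$ is a split monomorphism with $\psi(X)\cap\iota(Z)=0$ and that $\ker\pi_N=\iota(Z)\oplus\psi(X)\cong Z\oplus X$, while $\pi_N$ remains surjective. Setting $Z'=Z\oplus X$ this produces an exact sequence $0\to Z'\to M\oplus Z'\to N\to 0$ in $\grmodR$, and Theorem \ref{Zwara sequence} yields $M\dego N$.

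For part (2), I would again start from a sequence $0\to Z\to M\oplus Z\xrightarrow{\pi}N\oplus F\to 0$ given by Theorem \ref{Zwara sequence}, where $Z$ is graded Cohen-Macaulay by Remark \ref{remark of extension}(2). Since $F$ is free, it is projective in $\grCMR$, so the composite of $\pi$ with the projection onto $F$ splits; letting $C$ be its kernel gives $M\oplus Z\cong C\oplus F$ together with a short exact sequence $0\to Z\to C\to N\to 0$ of graded Cohen-Macaulay modules. As $R$ is Gorenstein, $\grCMR$ is a Frobenius category whose projective-injective objects are exactly the free modules; hence the maximal free summand of $Z$ is injective and the admissible monomorphism it induces into $C$ splits, so every free indecomposable occurs in $C$ with at least the multiplicity it has in $Z$. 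Comparing multiplicities in $M\oplus Z\cong C\oplus F$ then forces $F$ to be a direct summand of $M$, and we write $M\cong(M/F)\oplus F$. Finally, because $\HomR(F,-)$ is exact and $[M]=[N]+[F]$ in the Grothendieck group (Remark \ref{remark of extension}(4)), we obtain $[F,M/F]=[F,N]$, so applying part (1) with $X=F$ to $M=(M/F)\oplus F\dego N\oplus F$ gives $M/F\dego N$.

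Routine diagram chases aside, the main obstacle is the free-summand extraction in part (2): recognizing that a free summand of $M$ cannot be manufactured out of $Z$ alone requires the Frobenius structure of $\grCMR$ (free $=$ injective), which is precisely where Gorensteinness enters. In part (1) the corresponding crux is the observation that the hom-equality $[X,M]=[X,N]$ is exactly what makes $\HomR(X,\pi)$ surjective, allowing the identity on $X$ to be lifted and the summand $X$ to be split off.
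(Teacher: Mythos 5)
Your proof is correct and follows essentially the same route as the paper's: in (1) the count $[X,N]-[X,M]=0$ is used to split the summand $X$ off the Zwara sequence (the paper phrases this as the splitting of the middle column of a pullback diagram, you as an explicit lifting of $\mathrm{id}_X$, which amounts to the same thing), and in (2) both arguments extract $F$ as a direct summand of $M$ by combining projectivity of $F$ with the Gorenstein vanishing $\Ext^1_R(\text{CM},\text{free})=0$ to control the free summands of $Z$, and then reduce to part (1) via $[F,M/F]=[F,N]$. The only differences are cosmetic reorderings of these steps.
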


\begin{proof}
$(1)$ Since $M \oplus X$ gradually degenerates to $N \oplus X$, there exist an exact sequence 
$$
0 \to W \to  M \oplus X \oplus W \to  N \oplus X \to 0.
$$
We construct a pushout diagram.
$$
\begin{CD} 
@. @. 0 @. 0 @. \\
@. @.   @AAA @AAA \\
@. @. X @= X @. \\
@. @. @AAA @AAA \\
0 @>>> W @>>> M \oplus X \oplus W @>>> N \oplus X @>>> 0 \\
@. @|   @AAA @AAA \\
0 @>>> W @>>> E @>>> N @>>> 0 \\
@. @. @AAA @AAA \\
@. @. 0 @. 0. @. \\
\end{CD} 
$$
For the middle column sequence, 
$$
[X, X] + [X, E] - [X, M \oplus X \oplus W] = [X, E] - [X, M] - [X, W] \geq 0.  
$$
On the other hand, for the bottom row sequence, since $[X, M] = [X, N]$, 
$$
[X, N] + [X, W] - [X, E] = [X, M] + [X, W] - [X, E] \geq 0.  
$$
Thus we have 
$$
[X, X] + [X, E] - [X, M \oplus X \oplus W] = 0.  
$$
This implies that the middle column sequence splits, so that $X \oplus E \cong M \oplus X \oplus W$. 
Therefore $E \cong M \oplus W$ and we get 
$$
 0 \to W \to  M \oplus W \to  N \to 0.
$$
Namely $M$ gradually degenerates to $N$. 

\noindent
$(2)$ Since $M$ gradually degenerates to $N \oplus F$, we have an exact sequence
$$
0 \to Z \to M \oplus Z \to N \oplus F \to 0.
$$
Suppose that $Z$ contains a graded free module $G$ as a direct summand. 
We construct a pushout diagram
$$
\begin{CD} 
@. 0@. 0@.  @. \\
@. @AAA   @AAA @. \\
0 @>>> Z/G @>>> E @>>> N \oplus F @>>> 0 \\
@. @AAA @AAA @| \\
0 @>>> Z @>>> M \oplus Z @>>> N \oplus F @>>> 0 \\
@. @AAA   @AAA @. \\
 @. G @= G @. @. \\
@. @AAA @AAA @. \\
@.0 @. 0. @.  @. \\
\end{CD} 
$$
The left column sequence is a split sequence induced by the decomposition $Z \cong Z/G \oplus G$. 
Note that $E$ is also a graded Cohen-Macaulay module. 
Since $R$ is Gorenstein, the middle column sequence is also split. 
Hence
$$
E \oplus G \cong M \oplus Z \cong M \oplus Z/G \oplus G.
$$
This yields that $E \cong M \oplus Z/G$. 
Hence we may assume that $Z$ has no graded free modules as direct summands. 

Consider a composition of the surjection $M \oplus Z \to N \oplus F$ and the projection $N \oplus F \to F$. 
Then the composition mapping is split, so that $M$ contains $F$ as a direct summand. 
Hence $M \cong M/F \oplus F$ gradually degenerates to $N \oplus F$. 
Since $[F, M/F \oplus F] = [F, M] = [F, N \oplus F]$, by $(1)$, we conclude that $M/F$ gradually degenerates to $N$. 
\end{proof}

%%%%%%%%%%%%%%%%%%%%%%%
%%%%%%%%%%%%%%%%%%%%%%%
%%%%%%%%%%%%%%%%%%%%%%%
%%%%%%%%%%%%%%%%%%%%%%%
For indecomposable graded Cohen-Macaulay modules $M$ and $N$, we write $X \preceq Y$ if $X \cong Y$ or if there exists a finite path from $X$ to $Y$ in the AR quiver of $\grCMPSFR$.

\begin{lemma}\label{minimal}
Let $R$ be a graded Gorenstein ring which is of graded finite representation type and representation directed and let $M$ and $N \in \grCMR$. 
Assume that $h(M) = h(N)$, $M \homo N$ and $M$ and $N$ have no common direct summands. 
Let $X \in \grCMR$ be an indecomposable module such that $\preceq$-minimal with the property $[N, X] - [M, X] > 0$ and $E$ be a middle term of an AR sequence starting from $X$.  
Then $[E, N] = [E,M]$.
\end{lemma}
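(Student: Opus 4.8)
The plan is to track the two ``defect'' functions $d(Y)=[N,Y]-[M,Y]$ and $e(Y)=[Y,N]-[Y,M]$ on indecomposables; the first is non-negative since $M\homo N$, and $\FMN$ is exactly $\{Y : d(Y)>0\}$. I will use that $\End_R(Z)/\mathrm{rad}=k$ for every indecomposable $Z$ (so the cokernels coming from almost split maps behave as in the proof of Proposition \ref{Riedtmann}), and that representation directedness makes the AR quiver acyclic, so $\preceq$ is a partial order and any module admitting a path to $X$ is \emph{strictly} below $X$.

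First I would extract two numerical identities from almost split sequences. Applying $\HomR(-,M)$ and $\HomR(-,N)$ to the sequence $0\to X\to E\to \tau^{-1}X\to 0$ and using that $X\to E$ is left almost split (the cokernel of $\HomR(E,L)\to\HomR(X,L)$ is $k^{\mu(L,X)}$) gives, after subtraction,
\[
[E,N]-[E,M]=e(X)+e(\tau^{-1}X)-\bigl(\mu(N,X)-\mu(M,X)\bigr).
\]
Since $\omega_R\notin\FMN$, the module $X$ is not free, so there is also an AR sequence $0\to\tau X\to E'\to X\to 0$ ending at $X$; applying $\HomR(M,-)$ and $\HomR(N,-)$ to it and using that $E'\to X$ is right almost split yields the mesh identity
\[
d(\tau X)+d(X)=d(E')+\bigl(\mu(N,X)-\mu(M,X)\bigr).
\]

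Now I would feed in the minimality of $X$. Each indecomposable summand of $E'$, together with $\tau X$, admits a path to $X$ and hence lies strictly below $X$ in $\preceq$; by $\preceq$-minimality of $X$ in $\FMN$ (and $d\ge 0$) all of these have defect $0$, so $d(\tau X)=0$ and $d(E')=0$. The mesh identity then collapses to $d(X)=\mu(N,X)-\mu(M,X)$.

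The final, and I expect most delicate, ingredient is Auslander--Reiten duality, which I would use to intertwine the two defect functions through the translation, namely $e(Y)=d(\tau Y)$ for every $Y$. In the stable category of $\grCMR$ this comes from the isomorphism $\underline{\Hom}_R(Y,L)\cong D\,\underline{\Hom}_R(L,\tau Y)$; to convert it into an identity of genuine Hom-defects I would invoke $h(M)=h(N)$ to cancel the free-module contributions, since maps into and out of a free module $R(l)$ are governed by Lemma \ref{pd finite} and by degreewise dimension, hence depend only on the Hilbert series. Granting $e(Y)=d(\tau Y)$, I obtain $e(X)=d(\tau X)=0$ and $e(\tau^{-1}X)=d(X)=\mu(N,X)-\mu(M,X)$, and substituting into the first identity gives
\[
[E,N]-[E,M]=0+\bigl(\mu(N,X)-\mu(M,X)\bigr)-\bigl(\mu(N,X)-\mu(M,X)\bigr)=0.
\]
The points needing care are the precise grading-shifted form of AR duality for graded Cohen--Macaulay modules and the verification that the free-summand corrections really do cancel once $h(M)=h(N)$ is imposed.
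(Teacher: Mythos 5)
Your skeleton is sound: the two almost-split identities are stated correctly, the use of $\preceq$-minimality together with acyclicity of the AR quiver to force $d(\tau X)=d(E')=0$ (hence $d(X)=\mu(N,X)-\mu(M,X)$) is exactly right, and the final substitution does give $[E,N]=[E,M]$ \emph{once you know} $e(Y)=d(\tau Y)$. The genuine gap is in your justification of that intertwining identity. Passing from stable AR duality to ordinary Hom-dimensions does not leave behind only ``free-module contributions.'' Writing $\mathcal P(Y,N)$ for the space of maps $Y\to N$ factoring through frees, the cosyzygy sequence $0\to Y\to E_Y\to\Omega^{-1}Y\to 0$ gives $\dim_k\mathcal P(Y,N)=[E_Y,N]-[\Omega^{-1}Y,N]$ (every map from $Y$ to a free module extends to $E_Y$ since frees are sums of shifted canonical modules), so after the Hilbert-series cancellation $[E_Y,N]=[E_Y,M]$ you obtain $\underline{e}(Y)=e(Y)+e(\Omega^{-1}Y)$ for the stable defect, and dually $\underline{d}(Z)=d(Z)+d(\Omega Z)$. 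AR duality therefore yields $e(Y)+e(\Omega^{-1}Y)=d(\tau Y)+d(\Omega\tau Y)$, which is your identity only up to a syzygy-shifted copy of itself; since $\Omega$ is an autoequivalence of the stable category over a Gorenstein ring, this recursion never terminates, so the corrections do not cancel as you hope.

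The identity $e(Y)=d(\tau Y)$ is nonetheless true under your hypotheses: it is precisely Proposition \ref{formula} (with $X=\tau Y$), and its proof there uses not AR duality but the Riedtmann construction of Proposition \ref{Riedtmann}: summing $[N,Y]-[M,Y]$ copies of the AR sequences starting at the members of $\FMN$ produces $0\to U\to V\to W\to 0$ with $U\oplus W\oplus M\cong V\oplus N$, after which both defects reduce to the same Kronecker delta. Replacing your AR-duality step by that argument closes your proof. Note that the paper's own proof of the present lemma is more direct still: it evaluates $[E,-]$ on the Riedtmann sequence to get $[E,N]-[E,M]=\sum_{Y\in\FMN}([N,Y]-[M,Y])\,\mu(E,\tau^{-1}Y)$, and then kills each summand by the same minimality-plus-acyclicity argument you use ($\mu(E,\tau^{-1}X)>0$ would give a loop, and $\mu(E,\tau^{-1}Y)>0$ for $Y\not\cong X$ would give $Y\preceq X$), so it never needs the intertwining identity at all.
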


\begin{proof}
As in the proof of Proposition \ref{Riedtmann}, we can construct the sequence 
$$
0 \to U \to V \to W \to 0
$$
in $\grCMR$ such that $U \oplus W \oplus M \cong V \oplus N$ via taking  a direct sum of AR sequences starting from  modules in $\FMN$. 
This isomorphism implies that $[Z, U] + [Z, W] -[Z, V] = [Z, N] - [Z, M]$ for each $Z \in \grCMR$. 
Thus it is enough to show that the equality $[E, U] + [E, W] -[E, V] = 0$ holds. 
If there is a $Y \in \FMN$ such that, for the AR sequence $0 \to Y \to G \to \tau ^{-1}Y \to 0$, 
$$
[E, Y] + [E, \tau ^{-1}Y] - [E, G] > 0.
$$
If $Y \cong X$, then one can show that $\tau ^{-1}X$ is a direct summand of $E$, so that there is a cyclic path. 
This is a contradiction since $R$ is represented directed. 
Thus $Y$ is not isomorphic to $X$.  
The inequality also show that $\tau ^{-1}Y$ is a direct summand of $E$. 
Thus there is an irreducible map from $X$ to $\tau ^{-1}Y$. 
Hence $X$ is a direct summand of $G$, so that $Y \preceq X$. 
This is a contradiction.  
Consequently, for each $Y \in \FMN$, $[E, Y] + [E, \tau ^{-1}Y] - [E, G] = 0$, therefore $[E, N] = [E, M]$.  
\end{proof}

%%%%%%%%%%%%%%%%%%%%%%%%%%%%
%%%%%%%%%%%%%%%%%%%%%%%%%%%%
%%%%%%%%%%%%%%%%%%%%%%%%%%%%
%%%%%%%%%%%%%%%%%%%%%%%%%%%%
%%%%%%%%%%%%%%%%%%%%%%%%%%%%
\begin{proof}[Proof of Theorem \ref{Main theorem}]
The implication (3) $\Rightarrow$ (2) $\Rightarrow$ (1) is trivial. 
Note that $\Ext _R (X, X) = 0$ for an indecomposable $X \in \grCMR$ since $R$ is representation directed. 
Thus the implication (2) $\Rightarrow$ (3) can be shown by the same argument in \cite[3.5.]{Z99}. 
Now we shall show (1) $\Rightarrow$ (2). 
Set $V = \oplus _{W \in \FMN} W$. 
Since $[N, X] - [M, X] = 0$ for any $X \not \in \FMN$, we can show the implication by induction on $d = [N, V] - [M, V]$. 
If $d = 0$, $[N, Z] = [M, Z]$ for each $X \in \grCMR$. 
Thus $M \cong N$. 
Hence assume that $d > 0$ and $M$ and $N$ have no summand in common in the inductive step. 
We  take $X \in \grCMR$ in Lemma \ref{minimal} and let $E$ be a middle term of the AR sequence starting from $X$. 
By virtue of Lemma \ref{minimal} and  Lemma \ref{cancellation} (1) it is enough to show that $E \oplus M \dego E \oplus N$. 
Now we have $E \oplus M \exto X \oplus \tau^{-1} X \oplus M$. 
By the property of AR sequence, for each indecomposable $Y$,
$$
[X, Y] + [\tau ^{-1}X, Y] = [E, Y] + \delta _{X, Y}
$$
where $\delta _{X, Y} = 1$ if $X \cong Y$ and otherwise $0$. 
Thus $X \oplus \tau ^{-1} X \oplus M \homo E \oplus N$. 
We should remark that $\FF _{X \oplus \tau ^{-1} X \oplus M, E \oplus N}$ is contained in $\FMN$ since $X \in \FMN$. 
Then  
$$
\begin{array}{l}
[E \oplus N, V] - [X \oplus \tau^{-1} X \oplus M, V] \\ 
= [N, V] - [M, V] + [E, V] - [X \oplus \tau^{-1} X, V] \\ 
= d -1
\end{array}
$$
By the induction hypothesis, 
$$
E \oplus M \exto X \oplus \tau ^{-1} X \oplus M \dego E \oplus N
$$
so that $E \oplus M \dego E \oplus N$. 
\end{proof}

\begin{remark}
The implication $M \exto N \Rightarrow M \dego N$ does not hold in general. 
Let $R = k[x, y]/(x^2)$ with $\deg x = \deg y = 1$. 
Then $R(-1)$ gradually degenerates to $(x, y^2)R$. 
In fact we have an exact sequence
$$
\begin{CD} 
0 @>>> R/(x) (-2) @>{\tiny \begin{pmatrix}
{x} \\
{0}  \\
\end{pmatrix} }
>> R(-1) \oplus R/(x) (-2) @>{\tiny \begin{pmatrix}
x & y^2
\end{pmatrix} }>> (x, y^2)R @>>> 0. 
\end{CD} 
$$
Since $(x, y^2)R$ is an indecomposable graded Cohen-Macaulay module which is not isomorphic to $R(-1)$, $R(-1) \exto (x, y^2)R$ can never happen.  
See also \cite[Remark 2.5.]{HY10}. 
\end{remark}
%%%%%%%%%%%%%%%%%%%%%%%
%%%%%%%%%%%%%%%%%%%%%%%
%%%%%%%%%%%%%%%%%%%%%%%
%%%%%%%%%%%%%%%%%%%%%%%

%At the end of this section, we state the following proposition.  

\begin{proposition}\label{formula}
Let $R$ be a graded Gorenstein ring which is of graded finite representation type and let $M$ and $N$ be graded Cohen-Macaulay $R$-modules. 
Assume that $h(M) = h(N)$ and $M \homo N$. 
Then for each indecomposable non-free graded Cohen-Macaulay $R$-module $X$ we have the following equality. 
$$
[N, X] - [M, X] = [\tau^{-1}X, N] - [\tau^{-1}X, M].
$$
\end{proposition}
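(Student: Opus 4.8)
The plan is to prove the single identity $[N,X]-[M,X]=[\tau^{-1}X,N]-[\tau^{-1}X,M]$ for every indecomposable non-free $X$ by showing that the function $g(W):=([N,\tau W]-[M,\tau W])-([W,N]-[W,M])$ vanishes identically on $\grCMR$; the proposition is then the instance $g(\tau^{-1}X)=0$. First I would record that $g$ is additive in $W$ (additivity of $\tau$ and of the bracket) and that it kills graded free modules: if $F$ is free then $[N,F]=[M,F]$ by Lemma \ref{pd finite} (free modules have finite projective dimension), while $[F,N]=[F,M]$ because $[R(-a),Y]=\dim_kY_a$ and $h(M)=h(N)$; since also $\tau F=0$, we get $g(F)=0$. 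In particular $g$ is unchanged under adding free summands, so it is well defined on the stable category.

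The heart of the argument is the recursion $g(W)=-g(\Omega W)$. To get it I would use that, $R$ being Gorenstein, $\grCMR$ is a Frobenius category whose projective–injective objects are exactly the graded free modules, so $[A,B]=\dim_k\underline{\Hom}_R(A,B)+\dim_kP(A,B)$, where $P(A,B)$ is the space of maps factoring through a free module and, from a free cover $0\to\Omega B\to F_B\to B\to 0$, $\dim_kP(A,B)=[A,F_B]-[A,\Omega B]$. Next I would invoke Auslander–Reiten duality $\underline{\Hom}_R(-,\tau W)\cong D\,\Ext^1_R(W,-)$ (with $D=\Hom_k(-,k)$, valid since $R$ is a graded isolated singularity) to write $\dim_k\underline{\Hom}_R(N,\tau W)=\dim_k\Ext^1_R(W,N)$, and then a free cover $0\to\Omega W\to F_W\to W\to 0$ of the fixed module $W$ to express $\dim_k\Ext^1_R(W,N)=[W,N]+[\Omega W,N]-[F_W,N]$. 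Because the same $F_W,\Omega W$ (and $F_{\tau W}$) serve for both $N$ and $M$, every free-module term cancels against its $M$-counterpart by $h(M)=h(N)$, and assembling the two expansions gives $([N,\tau W]-[M,\tau W])=([W,N]-[W,M])+([\Omega W,N]-[\Omega W,M])-([N,\Omega\tau W]-[M,\Omega\tau W])$. Using $\Omega\tau\cong\tau\Omega$ in the stable category (syzygy commutes with the AR/Serre structure) and the definition of $g$, this collapses exactly to $g(W)=-g(\Omega W)$.

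To finish I would iterate along cosyzygies, so that $g(\Omega^{-n}W)=(-1)^n g(W)$ for all $n$. Since $\grCMR$ is of graded finite representation type, $\Omega^{-n}W$ is a finite direct sum of shifts $X_i(c_i)$ of the finitely many indecomposables, with $c_i\to+\infty$ as $n\to\infty$. On such a large positive shift both defects of $g$ vanish: the covariant term $[N,(\tau X_i)(c_i)]-[M,(\tau X_i)(c_i)]$ is zero for $c_i\gg0$ by Lemma \ref{key A}, and the contravariant term $[X_i(c_i),N]-[X_i(c_i),M]$ is zero for $c_i\gg0$ for the trivial reason that $\Hom_R(X_i(c_i),N)=0=\Hom_R(X_i(c_i),M)$ once the generators of $X_i(c_i)$ lie in degrees below the bottom degrees of $N$ and $M$. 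Hence $g(\Omega^{-n}W)=0$ for $n\gg0$, which forces $g(W)=0$; taking $W=\tau^{-1}X$ gives the claimed equality.

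The step I expect to be the main obstacle is matching the \emph{non-stable} parts of the two sides. Auslander–Reiten duality inevitably produces an $\Ext^1$, which has to be turned back into ordinary $\Hom$, and the cancellation of free-module contributions by $h(M)=h(N)$ only works if one resolves the fixed module $X$ (respectively $W$), never $M$ or $N$. The second delicate point is the direction of the limiting step: resolving by ordinary syzygies would push the shifts the wrong way for the contravariant difference $[-,N]-[-,M]$, for which no analogue of Lemma \ref{key A} is available, whereas running the recursion through \emph{cosyzygies} sends the shifts to $+\infty$, where that contravariant difference vanishes for purely finite-generation reasons and only Lemma \ref{key A} (for the covariant difference) is needed. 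Carefully verifying Auslander–Reiten duality and the commutation $\Omega\tau\cong\tau\Omega$ in the graded isolated-singularity setting are the remaining technical checks.
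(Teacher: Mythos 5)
Your argument is correct in outline, but it takes a genuinely different and considerably heavier route than the paper. The paper simply reuses the exact sequence $0 \to U \to V \to W \to 0$ built in the proof of Proposition \ref{Riedtmann} (a direct sum of AR sequences with $M \oplus U \oplus W \cong N \oplus V$): both sides of the desired identity then reduce to the defect of a single AR sequence $0 \to U \to V \to \tau^{-1}U \to 0$, where $[U\oplus \tau^{-1}U, X]-[V,X]=\delta_{U,X}$ and $[\tau^{-1}X, U\oplus\tau^{-1}U]-[\tau^{-1}X,V]=\delta_{\tau^{-1}U,\tau^{-1}X}$, and these two deltas are trivially equal. That proof is three lines of bookkeeping on top of machinery already in place. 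Your proof instead establishes the recursion $g(W)=-g(\Omega W)$ via graded Auslander--Reiten duality $\underline{\Hom}_R(N,\tau W)\cong D\,\Ext^1_R(W,N)$ and the stable commutation $\Omega\tau\cong\tau\Omega$, then kills $g$ by iterating cosyzygies until all shifts are large, where Lemma \ref{key A} handles the covariant difference and boundedness-below of $M,N$ handles the contravariant one. I checked the recursion: expanding $[N,\tau W]$ as stable part plus maps through the projective cover of $\tau W$, and $\Ext^1(W,N)$ via a free cover of $W$, the free terms do cancel against their $M$-counterparts because $h(M)=h(N)$, so the identity $g(W)=-g(\Omega W)$ holds; and the termination works because over a positively graded ring with $R_0=k$ the generator degrees of minimal (co)syzygies are strictly monotone, so the shifts $c_i$ really do tend to $+\infty$. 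The two technical inputs you flag --- graded AR duality in the degree-zero form and $\Omega\tau\cong\tau\Omega$ in $\SgrCMR$ --- are genuine dependencies not established in the paper (though both are standard for graded Gorenstein isolated singularities, and the second follows from the first by a Yoneda argument), whereas the paper's route needs only the elementary lifting property of AR sequences. In short: your proof works and is a nice illustration of how the grading can be used as a limiting device, but the intended argument is far shorter because the Riedtmann sequence has already done all the work.
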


\begin{proof}
Under the assumption, as in the proof of Proposition \ref{Riedtmann}, we can construct an exact sequence 
$$
0 \to U \to V \to W \to 0
$$
in $\grCMR$ such that $[U, X] + [W, X] -[V, X] = [N, X] - [M, X]$. 
It is enough to show that the equality $[U \oplus W, X] -[V, X] = [\tau^{-1}X, U \oplus W] -[\tau^{-1}X, V]$ holds for each indecomposable $X$. 
Moreover the sequence is a direct sum of AR sequences, we may assume that  $0 \to U \to V \to W \to 0$ is an AR sequence starting from $U$.
Let $X \in \grCMR$ be indecomposable and non projective. 
By the property of an AR sequence, we have
$$
[U \oplus W, X] - [V, X] = \delta _{U, X} \quad  [\tau^{-1}X, U \oplus W] -[\tau^{-1}X, V] = \delta _{W, \tau^{-1}X}.
$$  
Since $W \cong \tau^{-1}U$, we can get the equality. 
\end{proof}

%%%%%%%%%%%%%%%%%%%%%%%%%%%%%%%%%%%%%%%%%%%%%%%%%%%%%%%%%
%%%%%%%%%%%%%%%%%%%%%%%%%%%%%%%%%%%%%%%%%%%%%%%%%%%%%%%%%
%%%%%%%%%%%%%%%%%%%%%%%%%%%%%%%%%%%%%%%%%%%%%%%%%%%%%%%%%
%%%%%%%%%%%%%%%%%%%%%%% Section 4 %%%%%%%%%%%%%%%%%%%%%%%
%%%%%%%%%%%%%%%%%%%%%%%%%%%%%%%%%%%%%%%%%%%%%%%%%%%%%%%%%
%%%%%%%%%%%%%%%%%%%%%%%%%%%%%%%%%%%%%%%%%%%%%%%%%%%%%%%%%
%%%%%%%%%%%%%%%%%%%%%%%%%%%%%%%%%%%%%%%%%%%%%%%%%%%%%%%%%
%%%%%%%%%%%%%%%%%%%%%%%%%%%%%%%%%%%%%%%%%%%%%%%%%%%%%%%%%
\section{Remarks on stable degenerations of graded Cohen-Macaulay modules}\label{stable degeneration}

In the rest of the paper we consider the stable analogue of degenerations of graded Cohen-Macaulay modules. 

Let $R$ be a graded Gorenstein ring where $R_0 = k$ is a field and let $V = k[[t]]$ with a trivial gradation and $K = k(t)$. 
Note that $R\otimes _k V$ and $R\otimes _k K$ are graded Gorenstein rings as well. 
Then $\SgrCM (R\otimes _k V)$ and $\SgrCM (R\otimes _k K)$ are triangulated categories. 
We denote by $\L : \SgrCM (R\otimes _k V) \to \SgrCM (R\otimes _k K)$ (resp. $\R : \SgrCM (R\otimes _k V) \to \SgrCMR$) the triangle functor defined by the localization by $t$ (resp. taking $-\otimes _{V} V/tV$). 
See also \cite[Definition 4.1]{Y11}.

%%%%%%%%%%%%%%%%%%%%%%%%%%%%%%%%%%%%%%%%%%%%%
%%%%%%%%%%%%%%%%%%%%%%%%%%%%%%%%%%%%%%%%%%%%%
%%%%%%%%%%%%%%%%%%%%%%%%%%%%%%%%%%%%%%%%%%%%%
%%%% Definition of stably degeneration %%%%%%
%%%%%%%%%%%%%%%%%%%%%%%%%%%%%%%%%%%%%%%%%%%%%
%%%%%%%%%%%%%%%%%%%%%%%%%%%%%%%%%%%%%%%%%%%%%
%%%%%%%%%%%%%%%%%%%%%%%%%%%%%%%%%%%%%%%%%%%%%
\begin{definition}\label{stably degeneration}
Let $\ulM, \ulN \in \SgrCMR$.
We say that $\ulM$ stably degenerates to $\ulN$ if there exists a graded Cohen-Macaulay module ${\underline{Q}} \in \SgrCM (R\otimes _k V )$ such that $\L({\underline{Q}}) \cong {\underline{M \otimes _k K}}$ in $\SgrCM (R\otimes _k K)$ and $\R({\underline{Q}}) \cong \ulN$ in $\SgrCMR$.
\end{definition}

One can show the following characterization of stable degenerations similarly to the proof of \cite[Theorem 5.1]{Y11}. 

%%%%%%%%%%%%%%%%%%%%%%%%%%%%%%%%%%%%%%%%%%%%%
%%%%%%%%%%%%%%%%%%%%%%%%%%%%%%%%%%%%%%%%%%%%%
%%%%%%%%%%%%%%%%%%%%%%%%%%%%%%%%%%%%%%%%%%%%%
%%% Condition for stably degenerations %%%%%%
%%%%%%%%%%%%%%%%%%%%%%%%%%%%%%%%%%%%%%%%%%%%%
%%%%%%%%%%%%%%%%%%%%%%%%%%%%%%%%%%%%%%%%%%%%%
%%%%%%%%%%%%%%%%%%%%%%%%%%%%%%%%%%%%%%%%%%%%%
\begin{theorem}\label{conditions for stable degeneration}
Let $R$ be a graded Gorenstein ring where $R_0 = k$ is a field. 
The following conditions are equivalent for graded Cohen-Macaulay $R$-modules $M$ and $N$. 

\begin{itemize}
\item[(1)] $F \oplus M$ degenerates to $N$ for some graded free $R$-module $F$.
\item[(2)] There is a triangle in $\SgrCMR$
$$
\begin{CD}
\ulZ @>>> \ulM \oplus \ulZ \ @>>> \ulN @>>> \ulZ [1] . \\ 
\end{CD}
$$ 
\item[(3)] $\ulM$ stably degenerates to $\ulN$. 
\end{itemize}
\end{theorem}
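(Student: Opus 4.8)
The plan is to reduce all three statements to the non-stable degeneration criterion of Theorem~\ref{Zwara sequence}, using throughout that graded free modules become zero in the stable category and that $S := R\otimes _k V$ and $R\otimes _k K$ are again graded Gorenstein, so that $\SgrCM (S)$ and $\SgrCM (R\otimes _k K)$ are triangulated and $\L,\R$ are triangle functors. Concretely I would establish the two equivalences (1)$\Leftrightarrow$(2) and (1)$\Leftrightarrow$(3); together they yield the theorem. The implications (1)$\Rightarrow$(2), (2)$\Rightarrow$(1) and (1)$\Rightarrow$(3) are essentially formal, while (3)$\Rightarrow$(1) carries the real content.

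For (1)$\Leftrightarrow$(2): if $F\oplus M$ degenerates to $N$, Theorem~\ref{Zwara sequence} gives a short exact sequence $0\to Z\to (F\oplus M)\oplus Z\to N\to 0$ with $Z\in \grCMR$ by Remark~\ref{remark of extension}(2); passing to $\SgrCMR$ this becomes a triangle $\ulZ\to \ulM\oplus \ulZ\to \ulN\to \ulZ[1]$, since $\underline{F}=0$, which is (2). Conversely, over a Gorenstein ring every triangle of $\SgrCMR$ is isomorphic to the standard triangle attached to a short exact sequence of graded Cohen-Macaulay modules; representing the triangle of (2) in this way and adjusting the three terms by graded free summands (stably isomorphic Cohen-Macaulay modules differ only by free summands) produces a sequence $0\to Z\to (M\oplus Z)\oplus G\to N\oplus G'\to 0$ with $G,G'$ graded free. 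Theorem~\ref{Zwara sequence} then shows that $M\oplus G$ degenerates to $N\oplus G'$, and the cancellation property Lemma~\ref{cancellation}(2) strips off $G'$, giving (1).

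For (1)$\Rightarrow$(3): the module $Q$ supplied by Definition~\ref{degeneration} for the degeneration of $F\oplus M$ to $N$ is flat over $V$ with $Q\otimes _V V/tV\cong N$ and $Q\otimes _V K\cong (F\oplus M)\otimes _k K$. As $t$ is a nonzerodivisor on $Q$ and $Q/tQ\cong N$ is graded Cohen-Macaulay over $R$, a depth count gives $\depth _S Q=\dim R+1=\dim S$, so $Q$ is graded Cohen-Macaulay over $S$ and defines $\underline{Q}\in \SgrCM (S)$. Then $\R(\underline{Q})=\underline{Q\otimes _V V/tV}\cong \ulN$, and $\L(\underline{Q})=\underline{Q\otimes _V K}\cong \underline{(F\oplus M)\otimes _k K}\cong \underline{M\otimes _k K}$, the last step because $F\otimes _k K$ is free over $R\otimes _k K$, hence zero in the stable category. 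This is exactly Definition~\ref{stably degeneration}, giving (3).

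The implication (3)$\Rightarrow$(1) is where I expect the main obstacle. Here I am given only $\underline{Q}\in \SgrCM (S)$ with \emph{stable} isomorphisms $\L(\underline{Q})\cong \underline{M\otimes _k K}$ and $\R(\underline{Q})\cong \ulN$, and must manufacture an honest $V$-flat family witnessing an ordinary degeneration of $F\oplus M$ to $N$. I would choose a Cohen-Macaulay representative $Q$ and, after adding a graded free $S$-module (which remains $V$-flat and is killed by $\L$ and $\R$), promote the stable isomorphisms to genuine ones, arranging $Q\otimes _V K\cong (M\oplus \widetilde{F_2})\otimes _k K$ and $Q/tQ\cong N\oplus F_1$ with $F_1$ and $\widetilde{F_2}$ graded free over $R$. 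Such a $Q$ already exhibits an ordinary degeneration of $M\oplus \widetilde{F_2}$ to $N\oplus F_1$. The delicate point is to cancel the spurious free summands down to the precise shape of (1): since each graded piece $Q_n$ is free over $V=k[[t]]$, the Hilbert function is constant in the family, whence $h(M)+h(\widetilde{F_2})=h(N)+h(F_1)$, and this numerical constraint together with the Krull--Schmidt property is what allows the cancellation property of Lemma~\ref{cancellation} to remove the matching free parts and reduce $M\oplus \widetilde{F_2}\dego N\oplus F_1$ to $F\oplus M\dego N$. Reconciling the free corrections demanded on the generic and the special fibre through a single $V$-flat module is the crux of the whole argument.
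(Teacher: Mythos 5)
Your overall route is the same as the paper's: the paper proves this theorem by adapting the proof of \cite[Theorem 5.1]{Y11}, which is exactly the reduction you describe --- pass between short exact sequences of the form in Theorem \ref{Zwara sequence} and triangles in $\SgrCMR$ using that free modules vanish stably, and for $(1)\Rightarrow(3)$ check that the degeneration module $Q$ is Cohen--Macaulay over $R\otimes_k V$ and apply $\L$ and $\R$. Your treatment of $(1)\Leftrightarrow(2)$ and $(1)\Rightarrow(3)$ is sound at the level of detail the paper itself gives.

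The genuine gap is in $(3)\Rightarrow(1)$, and you have flagged it yourself without closing it: ``reconciling the free corrections demanded on the generic and the special fibre through a single $V$-flat module.'' The ingredient that resolves this --- and the only point the paper's proof actually spells out --- is that $R\otimes_k V$ and $R\otimes_k K$ are ${}^\ast$local, so every graded projective module over them is graded \emph{free}, hence a direct sum of shifts of the ring itself. Consequently a graded free $R\otimes_k K$-module is of the form $G\otimes_k K=(G\otimes_k V)\otimes_V K$ for a graded free $R$-module $G$, and a graded free $R$-module lifts to the graded free $R\otimes_k V$-module $G\otimes_k V$, which is $V$-flat and is sent by $\L$ and $\R$ to free modules on the two fibres. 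This is what lets you realize the two a priori unrelated free corrections (one on $Q\otimes_V K$, one on $Q/tQ$) as the two fibres of a single graded free $R\otimes_k V$-module added to $Q$: first correct the special fibre to $N\oplus F_1$, then the generic fibre to $(M\oplus F_2)\otimes_k K$, and strip $F_1$ at the end with Lemma \ref{cancellation}(2). Without this freeness/descent observation the step does not go through (for general graded projectives there would be no reason for the corrections to glue over $V$); with it, the argument is word-for-word the Artinian case of \cite[Theorem 5.1]{Y11}, which is precisely the paper's remark. Your appeal to constancy of the Hilbert function along the family is true but is not the tool that finishes the cancellation; Lemma \ref{cancellation}(2) together with Krull--Schmidt already does that once the free modules have been descended.
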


\begin{proof}
We should note that the implication $(3) \Rightarrow (1)$. 
In our setting, $R\otimes _k V$ and $R\otimes _k K$ are ${}^\ast$local. 
Then graded projective $R\otimes _k V$ (resp. $R\otimes _k K$) -modules are graded free $R\otimes _k V$ (resp. $R\otimes _k K$) -modules. 
Hence we can show the implication as in the Artinian case of the proof of \cite[Theorem 5.1]{Y11}. 
\end{proof}

\begin{remark}
A theory of degenerations for derived categories has been studied in \cite{JSZ05}. 
They have shown that, for complexes $M$, $N$ in the bounded derived category of a finite dimensional algebra, $M$ degenerates to $N$ if and only if there exists a triangle of the form which appears in the above theorem. 
Let $R$ be a graded Gorenstein ring with $R_0 = k$ is an algebraically closed field. 
As shown in \cite{A01, KST07, IT10}, suppose that $R$ has a simple singularity then there exists a Dynkin quiver $Q$ such that we have a triangle equivalence
$$
\SgrCMR \cong \mathrm{D}^{b}(kQ)
$$ 
where $\mathrm{D}^{b}(kQ)$ is a bounded derived category of the category of finitely generated left modules over a path algebra $kQ$. 
By virtue of Theorem \ref{conditions for stable degeneration}, we can describe the degenerations for $\mathrm{D}^{b}(kQ)$ in terms of the graded degenerations for $\grCMR$. 
Since the graded ring $R$ is of graded finite representation type and representation directed, we have already seen them in Theorem \ref{Main theorem}.
\end{remark}

%%%%%%%%%%%%%%%%%%%%%%%%%%%%%%%%%%%%%%%%%%%%%%%%%%%%%%%
%%%%%%%%%%%% Acknowledgment %%%%%%%%%%%%%%%%%%%%%%%%%%%
%%%%%%%%%%%%%%%%%%%%%%%%%%%%%%%%%%%%%%%%%%%%%%%%%%%%%%%
\section*{Acknowledgments}
The author express his deepest gratitude to Tokuji Araya and Yuji Yoshino for valuable discussions and helpful comments.

%%%%%%%%%%%%%%%%%%%%%%%%%%%%%%%%%%%%%%%%%%%%%%%%%%
%%%%%%%%%%%%%%%%%%%%%%%%%%%%%%%%%%%%%%%%%%%%%%%%%%
%%%%%%%%%%%%%%%%%%%%%%%%%%%%%%%%%%%%%%%%%%%%%%%%%%
%%%%%%%%%% References %%%%%%%%%%%%%%%%%%%%%%%%%%%%
%%%%%%%%%%%%%%%%%%%%%%%%%%%%%%%%%%%%%%%%%%%%%%%%%%
%%%%%%%%%%%%%%%%%%%%%%%%%%%%%%%%%%%%%%%%%%%%%%%%%%
%%%%%%%%%%%%%%%%%%%%%%%%%%%%%%%%%%%%%%%%%%%%%%%%%%
%\bibliographystyle{model1-num-names}


\begin{thebibliography}{99}
\bibitem{A01}
{\sc T. ~Araya}, 
{\it Exceptional sequences over graded Cohen-Macaulay rings}. 
Math. J. Okayama Univ. {\bf 41} (1999), 81--102 (2001).

\bibitem{AB89}
{\sc M. ~Auslander} and {\sc R.-O. ~Buchweitz}, 
{\it The homological theory of maximal Cohen-Macaulay approximations}. 
Mem. Soc. Math. France (N.S.) No. 38 (1989), 5--37.


\bibitem{AR87}
{\sc M. ~Auslander} and {\sc I. ~Reiten}, 
{\it Almost split sequences for $\Z$-graded rings}. 
Singularities, representation of algebras, and vector bundles (Lambrecht, 1985), 232--243, 
Lecture Notes in Math., 1273, Springer, Berlin, 1987.

\bibitem{AR88}
{\sc M. ~Auslander} and {\sc I. ~Reiten}, 
{\it Almost split sequences for abelian group graded rings}. 
J. Algebra {\bf 114} (1988), no. 1, 29--39.


\bibitem{B89}
{\sc K. ~Bongartz}, 
{\it A generalization of a theorem of M. Auslander}. 
Bull. London Math. Soc. {\bf 21} (1989), no. 3, 255--256.


\bibitem{B96}
{\sc K. ~Bongartz}, 
{\it On degenerations and extensions of finite-dimensional modules}.
Adv. Math. {\bf 121} (1996), 245--287.

\bibitem{BH}
{\sc W.~Bruns} and {\sc J.~Herzog}, 
{\it Cohen-Macaulay Rings}, 
 Cambridge Studies in Advanced Mathematics, 39. 
Cambridge University Press, Cambridge, 1993. xii+403 pp. 
Revised edition, 1998.


\bibitem{HY10}
{\sc N. ~Hiramatsu} and {\sc Y. ~Yoshino}, 
{\it Examples of degenerations of Cohen-Macaulay modules}, 
to appear in Proc. Amer. Math. Soc., arXiv1012.5346.

\bibitem{IT10}
{\sc O.~Iyama} and {\sc R.~Takahashi}, 
{\it Tilting and cluster tilting for quotient singularities}, 
to appear in Math. Ann., arXiv:1012.5954. 


\bibitem{JSZ05}
{\sc B. ~Jensen}, {\sc X, ~Su} and {\sc A. ~Zimmermann}, 
{\it Degenerations for derived categories}. 
J. Pure Appl. Algebra {\bf 198} (2005), no. 1-3, 281--295.

\bibitem{KST07}
{\sc H. ~Kajiura}, {\sc K, ~Saito} and {\sc A. ~Takahashi}, 
{\it Matrix factorization and representations of quivers. II. Type $ADE$ case}. 
Adv. Math. {\bf 211} (2007), no. 1, 327--362. 

\bibitem{R86}
{\sc C. ~Riedtmann}, 
{\it Degenerations for representations of quivers with relations}. 
Ann. Scient. $\acute{E}$cole Norm. Sup. $4^e$ s$\grave{e}$rie {\bf 19} (1986), 275--301.


\bibitem{Y}
{\sc Y.~Yoshino}, 
{\it Cohen-Macaulay Modules over Cohen-Macaulay Rings}, 
London Mathematical Society Lecture Note Series {\bf 146}. 
Cambridge University Press, Cambridge, 1990. viii+177 pp. 


\bibitem{Y02}
{\sc Y. ~Yoshino}, 
{\it On degenerations of Cohen-Macaulay modules}. 
J. Algebra {\bf 248} (2002), 272--290.


\bibitem{Y04}
{\sc Y. ~Yoshino}, 
{\it On degenerations of modules}. 
J. Algebra {\bf 278} (2004), 217--226.


\bibitem{Y11}
{\sc Y.~Yoshino}, 
{\it Stable degenerations of Cohen-Macaulay modules}, 
J. Algebra {\bf 332} (2011), 500--521. 


\bibitem{Z98}
{\sc G.~Zwara},
{\it A degeneration-like order for modules}. 
Arch. Math. {\bf 71} (1998), 437--444.


\bibitem{Z99}
{\sc G. ~Zwara}, 
{\it Degenerations for modules over representation-finite algebras}. 
Proc. Amer. Math. Soc. {\bf 127} (1999), 1313--1322.


\bibitem{Z00}
{\sc G.~Zwara},
{\it Degenerations of finite-dimensional modules are given by extensions}. 
Compositio Math. {\bf 121} (2000), 205--218.

\end{thebibliography}
\end{document}